\DeclareFontFamily{U}{mathx}{\hyphenchar\font45}
\DeclareFontShape{U}{mathx}{m}{n}{
      <5> <6> <7> <8> <9> <10>
      <10.95> <12> <14.4> <17.28> <20.74> <24.88>
      mathx10
      }{}
\DeclareSymbolFont{mathx}{U}{mathx}{m}{n}
\DeclareMathAccent{\widecheck}{0}{mathx}{"71}
\DeclareMathAccent{\wideparen}{0}{mathx}{"75}
\numberwithin{equation}{section}
\theoremstyle{plain}
\newtheorem{theorem}[equation]{Theorem}
\newtheorem{proposition}[equation]{Proposition}
\newtheorem{lemma}[equation]{Lemma}
\newtheorem{corollary}[equation]{Corollary}
\theoremstyle{definition}
\newtheorem{definition}[equation]{Definition}
\newtheorem{example}[equation]{Example}
\theoremstyle{remark}
\newtheorem{remark}[equation]{Remark}
\newcommand{\Coker}{\operatorname{Coker}}
\newcommand{\Ext}{\operatorname{Ext}}
\newcommand{\id}{\operatorname{id}}
\newcommand{\Hom}{\operatorname{Hom}}
\newcommand{\Ker}{\operatorname{Ker}}
\newcommand{\Ho}{\mathrm{Ho}}
\newcommand{\R}{\mathrm{R}}
\newcommand{\rmL}{\mathrm{L}}
\newcommand{\rmD}{\mathrm{D}}
\newcommand{\ul}{\underline}
\newcommand{\xrto}{\xrightarrow}
\def\A{\mathcal A}
\def\C{\mathcal C}
\def\D{\mathcal D}
\def\E{\mathcal E}
\def\F{\mathcal F}
\def\G{\mathcal G}
\def\M{\mathcal M}
\def\T{\mathcal T}
\def\H{\mathcal H}
\def\X{\mathcal X}
\begin{document}

\title[\tiny{A homotopy theory of additive categories}]{A homotopy theory of additive categories with suspensions}
\author [\tiny{Zhi-Wei Li}] {Zhi-Wei Li}

\date{\today}
\thanks{The author was supported by National Natural Science Foundation
of China (No.s 11671174 and 11571329).}

\email{zhiweili@jsnu.edu.cn}
\subjclass[2010]{18E35, 18E10, 18E30}
\keywords{homotopy theory; localizations; pre-triangulated categories; exact model categories}
\maketitle


\maketitle
\begin{center}
\tiny{School of Mathematics and Statistics, \ \ Jiangsu Normal University \\
Xuzhou 221116, Jiangsu, PR China.}
\end{center}
\begin{abstract}
We develop a homotopy theory for additive categories endowed with endofunctors, analogous to the concept of a model structure. We use it to construct the homotopy theory of a Hovey triple (which consists of two compatible complete cotorsion pairs) in an arbitrary exact category. We show that the homotopy category of an exact model structure (in the sense of Hovey) in a weakly idempotent complete exact category is equivalent to the subfactor category of cofibrant-fibrant objects as pre-triangulated categories.
\end{abstract}

\setcounter{tocdepth}{1}

\section{Introduction}

The starting point to ``do homotopy theory" is the localization of a category with respect to a class of morphisms. But in general one has little control over the morphisms in the corresponding homotopy category. An excellent way to overcome this difficulty is Quillen's theory of model structures, which are structures consisting of three classes of morphisms (called {\it cofibrations}, {\it fibrations} and {\it weak equivalences}) satisfying five axioms \cite{Quillen67}. There are two advantages of a model structure: one is that we can realize the homotopy category as the subfactor category of cofibrant-fibrant objects; the other one is that the model structure may induce a $\mathbb{Z}$-linear structure and a pre-triangulated structure on the corresponding homotopy category \cite{Quillen67, Hovey99}.

However, the work of Happel \cite{Happel88} and the recent work of Iyama and Yoshino \cite{Iyama-Yoshino} show that for some additive categories with suspensions (in the sense of Heller \cite{Heller68}) and a class of morphisms determined by some homologically finite subcategories, one can do the same homotopy theory independent of model structures. Even we can model their work, but it seems superfluous. In this paper, we provide a framework for general additive categories with suspensions to construct homotopy theory based on the work in \cite{ZWLi2}. There we find the partial one-sided triangulated structures in additive categories with suspensions and show that they are proper settings for the construction of one-sided triangulated categories. Our framework covers many instances in algebra and geometry where homotopy theories are constructed. Compared with Quillen's theory, it seems more flexible and there is no obvious, at least to the author, a model structure for our homotopy theory.

We now give some details about our results. In an arbitrary additive category $\A$, we introduce the notion of a {\it localization triple} which consists of three full additive subcategories $\C,\X$ and $\F$ of $\A$ satisfying some conditions such that we can do localization for $\A$ with respect to a class $\mathcal{S}$ of morphisms of $\A$ decided by them. We define the homotopy category $\Ho(\C, \X, \F)$ of a localization triple $(\C,\X, \F)$ to be the corresponding localization $\A[\mathcal{S}^{-1}]$ which is equivalent to the subfactor category $(\C\cap \F)/\X$. For the aim of developing homotopy theory for a localization triple, we introduce the notion of a {\it pre-partial triangulated category} which is an additive category $\A$ with a partial left triangulated structure $(\Omega, \rmL(\F), \X)$ and a partial right triangulated structure $(\Sigma, \R(\C), \X)$ satisfying some compatible conditions. Our first main result is the following:
\vskip5pt
\noindent{\bf Theorem} (\ref{thm:pretricat} ) {\it  If $(\A, \Omega, \Sigma, \rmL(\F), \R(\C), \X)$ is a pre-partial triangulated category, then the subfactor category $(\C\cap \F)/\X$ is a pre-triangulated category.}
\vskip5pt
\noindent Special cases of the above theorem contain \cite[Theorem 2.2]{Jorgensen} and \cite[Corollary 4.10]{Beligiannis01}. When $(\C,\X,\F)$ is a localization triple in a pre-partial triangulated category $(\A, \Omega, \Sigma, \rmL(\F), \R(\C), \X)$, then the homotopy category $\Ho(\C,\X,\F)$ is a pre-triangulated category.

The concept of an exact model category was introduced in \cite{Hovey02, Gillespie13}. In brief, it is a weakly idempotent complete additive category which has both an exact structure and a model structure and the two structures are compatible. Exact model categories play more and more important roles in the study of algebraic triangulated categories \cite{Hovey02, Beligiannis01}, singularity categories \cite{Becker12}, relative homological algebras \cite{Beligiannis/Reiten07}, approximation theory \cite{Saorin/Stovicek} and algebraic geometry \cite{Gillespie12, Gillespie13}. For more background and overview about exact model categories we refer the reader to \cite{Hovey07, Stovicek}. For an exact category, the notion of a {\it Hovey triple} \cite{Gillespie12} plays the central role in the study of exact model category. In fact, a Hovey triple determines uniquely an exact model structure in a {\it weakly idempotent complete} (i.e. every split monomorphism is an inflation ) exact category \cite{Hovey02}. But for an arbitrary exact category, this is not necessarily the case \cite{Gillespie12}. Using our theory, we can develop the homotopy theory for a Hovey triple in an arbitrary exact category not depending on model structures, which is our second main result:

\vskip5pt
\noindent{\bf Theorem} (\ref{thm:homoHoveytriple}) {\it  Let $(\C, \mathcal{W}, \F)$ be a Hovey triple in an exact category $(\A, \E)$. Denote by $\X=\C\cap \mathcal{W}\cap \F$. Then

$(\mathrm{i})$ \ $(\C, \X, \F)$ is a localization triple.

$(\mathrm{ii})$ \ $(\A, 0, 0,, \rmL(\F), \R(\C), \X)$ is a pre-partial triangulated category. }

\vskip5pt

In general, the homotopy category of a pointed model category is a pre-triangulated category by Quillen's original work \cite{Quillen67}, see also \cite{Hovey99}. Since an exact model category is certainly pointed, its homotopy category carries a pre-triangulated structure induced by the exact model structure. Let $\M$ be an exact model structure induced by a Hovey triple $(\C,\mathcal{W},\F)$ in a weakly idempotent complete category $\A$. It is well known that the homotopy category $\Ho(\A)$ of $\A$ is equivalent to the subfactor category $(\C\cap \F)/\X$ (here $\X=\C\cap \mathcal{W}\cap \F$) \cite{Beligiannis/Reiten07, Gillespie11, Becker12}. By our second main theorem, the latter also has a pre-triangulated structure induced by the pre-partial triangulated structure. Our third main result shows that these two pre-triangulated structures are compatible.

\vskip5pt

\noindent{\bf Theorem} (\ref{thm:model vs triple}) {\it Let $\M$ be the exact model structure induced by the Hovey triple $(\C,\mathcal{W},\F)$ in a weakly idempotent complete exact category $(\A,\E)$. Denote by $\X=\C\cap\mathcal{W}\cap \F$, then the homotopy category $\Ho(\M)$ is equivalent to the subfactor category $(\C\cap \F)/\X$ as pre-triangulated categories.}

\vskip5pt

Throughout this paper, unless otherwise stated, that all subcategories of additive categories considered are full, closed under isomorphisms, all functors between additive categories are assumed to be additive.

\subsection*{Acknowledgements} I would like to thank Henning Krause, Xiao-Wu Chen, Yu Ye for
their helpful discussions and suggestions.

\section{Preliminaries}

In this section we recall some basic facts and notions on partial one-sided triangulated categories in \cite{ZWLi2}.

\subsection*{Stable categories of additive categories} Let $\C$ be an additive category and $\X$ an additive subcategory of $\C$. We denote by $\C/\X$ the {\it stable} or {\it factor category} of $\C$ modulo $\X$. Recall that the objects of $\C/\X$ are the objects of $\C$, and for two objects $A$ and $B$ the Home space $\Hom_{\C/\X}(A,B)$ is the quotient $\Hom_\C(A,B)/\X(A,B)$, where $\X(A,B)$ is the subgroup of $\Hom_\C(A,B)$ consisting of those morphisms factorizing through an object in $\X$. Note that the stable category $\C/\X$ is an additive category and the canonical functor $\C\to \C/\X$ is an additive functor. For a morphism $f\colon A\to B$ in $\C$, we use $\ul{f}$ to denote its image in $\C/\X$.

\subsection*{Right triangulated categories} If $\H$ is an arbitrary category endowed with a functor $\Sigma\colon \H\to \H$ (such a category is called a {\it category with suspension} \cite{Heller68}), then a {\it right $\Sigma$-sequence} in $\H$ is a sequence of the form $A\stackrel{f}\to B\stackrel{g}\to C \stackrel{h}\to \Sigma(A)$.
\begin{definition} \label{def:rtricat} (\cite[Definition 1.1]{ABM})\ Let $\T$ be an additive category endowed with an additive endofunctor $\Sigma$. Let $\Delta$ be a class of right $\Sigma$-sequences called {\it right triangles}. The triple $(\T, \Sigma, \Delta)$ is called a {\it right triangulated category} if $\Delta$ is closed under isomorphisms and the following four axioms hold:

	(RT1)  For any morphism $f\colon A\to B$, there is a right $\Sigma$-sequence $A\xrto{f} B\to C\to \Sigma(A)$ in $\Delta$. For any object $A\in \T$, the right $\Sigma$-sequence $0\to A\xrto{1_A} A\to 0$ is in $\Delta$.
	
	(RT2)  If $A\xrto{f} B\xrto{g} C\xrto{h} \Sigma(A)$ is a right triangle, so is $B\xrto{g} C\xrto{h} \Sigma(A) \xrto{-\Sigma(f)} \Sigma(B)$.

	(RT3)  If the rows of the following diagram are right triangles and the leftmost square is commutative, then there is a morphism $\gamma\colon C\to C'$ making the whole diagram commutative:	
\[\xy\xymatrixcolsep{2pc}\xymatrix@C16pt@R16pt{A\ar[r]^f\ar[d]_{\alpha}&B \ar[r]^g\ar[d]^\beta&C\ar[r]^-h\ar@{.>}[d]^\gamma & \Sigma(A)\ar[d]^{\Sigma(\alpha)}\\
A'\ar[r]^{f'}&B'\ar[r]^{g'}&C'\ar[r]^-{h'}&\Sigma(A')}
	\endxy\]

(RT4)  For any three right triangles: $A\xrto{f} B\xrto{l} C'\to \Sigma(A)$, $B\xrto{g} C\xrto{h} A'\xrto{j} \Sigma(B)$ and $A\xrto{g\circ f} C\to B'\to \Sigma{A}$, there is a commutative diagram	
\[\xy\xymatrixcolsep{2pc}\xymatrix@C16pt@R16pt{
A\ar[r]^f\ar@{=}[d]&B\ar[r]^l\ar[d]^g&C'\ar[r]\ar@{.>}[d]& \Sigma(A)\ar@{=}[d]\\
A\ar[r]^{g\circ f}&C\ar[r]\ar[d]^{h}&B'\ar[r]\ar@{.>}[d]&\Sigma(A)\ar[d]^{\Sigma(f)}\\
&A'\ar@{=}[r]\ar[d]^{j}&A'\ar[r]^-{j}\ar[d]&\Sigma(B)\\
&\Sigma(B)\ar[r]^{\Sigma(l)}&\Sigma(C')}
	\endxy\]
	such that the second column from the right is a right triangle.
\end{definition}
When the endofunctor $\Sigma$ is an auto-equivalence, a right triangulated category $(\T, \Sigma, \Delta)$ is a triangulated category in the sense of \cite{Verdier}. The notion of a {\it left triangulated category} is defined dually.

\subsection*{Partial right triangulated categories}
Let $\A$ be an additive category endowed with an additive endofunctor $\Sigma\colon \A\to \A$. If $\X$ is an additive subcategory of $\A$, then a morphism $f\colon A\to B$ in $\A$ is said to be an {\it $\X$-monic} if the induced morphism $f^*=\Hom_\A(f, \X)\colon \Hom_\A(B, \X)\to \Hom_\A(A, \X)$ is surjective. The notion of an {\it$\X$-epic} is defined dually. Recall that a morphism $f\colon A\to X$ in $\A$ is called an {\it $\X$-preenvelope} (also called a {\it left $\X$-approximation} of $A$ in the literature) if $f$ is an $\X$-monic and $X\in \X$. Dually a morphism $g\colon X\to A$ is called an {\it $\X$-precover} if $g$ is an $\X$-epic and $X\in \X$. If $\C$ is an additive subcategory of $\A$, then a right $\Sigma$-sequence $A\stackrel{f}\to B\stackrel{g}\to C\stackrel{h}\to \Sigma(A)$
in $\A$ is called a {\it right $\C$-sequence} if $C\in \C$, $g$ is a {\it weak cokernel} of $f$ (i.e. the induced sequence
$\Hom_\A(C, \A)\to \Hom_\A(B, \A)\to \Hom_\A(A, \A)$ is exact), and $h$ is a weak cokernel of $g$.

\begin{definition}\label{def:prtc} (\cite[Definition 2.2]{ZWLi2}) \ Let $\A$ be an additive category endowed with an additive endofunctor $\Sigma$. Let $\X\subseteq \C$ be two additive subcategories of $\A$ and $\R(\C)$ a class  of right $\C$-sequences (called {\it right $\C$-triangles}).  The quadruple $(\A, \Sigma, \R(\C), \X)$ is said to be a {\it partial right triangulated category} if $\R(\C)$ is closed under isomorphisms and finite direct sums and the following axioms hold:

(PRT1) (i) For each $A\in \C$, there is a right $\C$-triangle $A\xrto{i} X\to U\to \Sigma(A)$ with $i$ an $\X$-preenvelope.

 (ii)  For each morphism $f\colon A\to B$ in $\C$, $A\xrightarrow{\left(\begin{smallmatrix}
	1 \\
	f
	\end{smallmatrix}\right)} A\oplus B \xrightarrow{(f, -1)} B\xrto{0} \Sigma(A)$ is in $\R(\C)$.
\vskip5pt
(iii) If $A\xrto{i} X\to U\to \Sigma(A)$ is in $\R(\C)$ with $i$ an $\X$-preenvelope in $\C$, then for any morphism $f\colon  A\to B$ in $\C$, there is a right $\C$-triangle $A\xrightarrow{\left(\begin{smallmatrix}
	i \\
	f
	\end{smallmatrix}\right)} X\oplus B \to N\to \Sigma(A)$.

(PRT2)  For any commutative diagram of right $\C$-triangles
\[\xy\xymatrixcolsep{2pc}\xymatrix@C16pt@R16pt{
A\ar[r]^f\ar[d]_{\alpha}&B\ar[r]\ar[d]&C\ar[r]\ar[d]^\gamma& \Sigma(A)\ar[d]^{\Sigma(\alpha)}\\
A'\ar[r]&X\ar[r]^s&U\ar[r]&\Sigma(A')}
\endxy\]
	with $X\in \X$, if $\alpha$ factors through $f$, then $\gamma$ factors through $s$.

(PRT3)  If the rows of the following diagram are in $\R(\C)$ with the leftmost square commutative, then there is a morphism $\gamma\colon C\to C'$ making the whole diagram commutative:
		\[\xy\xymatrixcolsep{2pc}\xymatrix@C16pt@R16pt{A\ar[r]^f\ar[d]_{\alpha}&B \ar[r]^g\ar[d]^\beta&C\ar[r]^-h\ar@{.>}[d]^\gamma & \Sigma(A)\ar[d]^{\Sigma(\alpha)}\\
A'\ar[r]^{f'}&B'\ar[r]^{g'}&C'\ar[r]^-{h'}&\Sigma(A')}
	\endxy\]

(PRT4)  If $A\xrto{f} B\xrto{l} C'\to \Sigma(A)$, $B\xrto{g} C \xrto{h} A'\xrto{j} \Sigma(B)$ and $A\xrightarrow{g\circ f} C \to B'\to \Sigma(A)$ are in $\R(\C)$ such that $f$ and $g$ are $\X$-monics in $\C$, then there is a commutative diagram
\[\xy\xymatrixcolsep{2pc}\xymatrix@C16pt@R16pt{
A\ar[r]^f\ar@{=}[d]&B\ar[r]^l\ar[d]^g&C'\ar[r]\ar@{.>}[d]^r& \Sigma(A)\ar@{=}[d]\\
A\ar[r]^{g\circ f}&C\ar[r]\ar[d]^{h}&B'\ar[r]\ar@{.>}[d]&\Sigma(A)\ar[d]^{\Sigma(f)}\\
&A'\ar@{=}[r]\ar[d]^{j}&A'\ar[r]^-{j}\ar[d]&\Sigma(B)\\
&\Sigma(B)\ar[r]^{\Sigma(l)}&\Sigma(C')}
	\endxy\]
such that the second column from the right is in $\R(\C)$ with $r$ an $\X$-monic.
\end{definition}

The notion of a {\it partial left triangulated category} is defined dually.



\subsection*{The triangulation of the subfactors of partial one-sided triangulated categories}
 Let $(\A,\Sigma, \R(\C), \X)$ be a partial right triangulated category. For each $A\in \C$, we {\it fix} a right $\C$-triangle $A\xrto{i^A} X^A\xrto{p^A} U^A\xrto{q^A} \Sigma(A)$ with $i^A$ an $\X$-preenvelope. Then there is a functor $\Sigma^\X\colon \C/\X\to \C/\X$ by sending each object $A$ to $U^A$ and each morphism $\ul{f}\colon A\to B$ to $\ul{\kappa}^f$, where the morphism $\kappa^f$ is defined by the following commutative diagram:
\begin{equation}\label{kappaf}
\xy\xymatrixcolsep{2pc}\xymatrix@C16pt@R16pt{
A\ar[r]^{i^A}\ar[d]_f&X^A\ar[r]^{p^A}\ar[d]^{\sigma^f}&U^A\ar[r]^-{q^A}\ar[d]^{\kappa^f}& \Sigma(A)\ar[d]^{\Sigma(f)}\\
B\ar[r]^{i^B}&X^B\ar[r]^{p^B}&U^B\ar[r]^-{q^B}&\Sigma(B)}
\endxy
\end{equation}
A {\it standard right triangle} in the subfactor category $\C/\X$ is an induced right $\Sigma^\X$-sequence $A\stackrel{\ul{f}}\to B\stackrel{\ul{g}}\to C\xrightarrow{\ul{\xi}(f,g)}\Sigma^\X(A)$ by a right $\C$-triangle$A\stackrel{f}\to B\stackrel{g}\to C\stackrel{h}\to \Sigma(A) $ with $f$ an $\X$-monic in $\C$ and the following commutative diagram
\begin{equation}\label{xif}
\xy\xymatrixcolsep{2pc}\xymatrix@C16pt@R16pt{
A\ar[r]^f\ar@{=}[d]&B\ar[r]^g\ar[d]^{\delta^f}&C\ar[r]^-h\ar[d]^{\xi(f,g)}& \Sigma(A)\ar@{=}[d]\\
A\ar[r]^{i^A}&X^A\ar[r]^{p^A}&U^A\ar[r]^-{q^A}&\Sigma(A)}
\endxy
\end{equation}
 Denote by $\Delta^\X$ the class of right $\Sigma^\X$-sequences in $\C/\X$ which are isomorphic to standard right triangles. The right triangles in $\Delta^\X$ are called {\it distinguished right triangles}.

  Dually, if $(\A, \Omega, \rmL(\C), \X)$ is a partial left triangulated category, for each object $A\in \C$, we {\it fix} a left $\C$-triangle $\Omega(A)\xrto{\nu_A} U_A\xrto{\iota_A} X_A\xrto{\pi_A} A$ with $\pi_A$ an $\X$-precover, then we can define an additive endofunctor $\Omega_\X\colon \C/\X\to \C/\X$, and the {\it standard left triangles} in $\C/\X$. Denote by $\nabla_\X$ the class of left $\Omega_\X$-sequences in $\C/\X$ which are isomorphic to standard left triangles. Then we have:

 \begin{theorem} \label{thm:main} $($\cite[Theorem 3.2]{ZWLi2}$)$  \ $(\mathrm{i})$ \ If $(\A, \Sigma, \R(\C), \X)$ is a partial right triangulated category, then $(\C/\X, \Sigma^\X, \Delta^\X)$ is a right triangulated category.
	
	$(\mathrm{ii})$ \ If $(\A, \Omega, \rmL(\C), \X)$ is a partial left triangulated category, then $(\C/\X, \Sigma_\X, \nabla_\X)$ is a left triangulated category.
\end{theorem}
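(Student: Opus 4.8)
The plan is to prove part~(i); part~(ii) follows by the evident dualization, replacing right $\C$-triangles, $\X$-preenvelopes and $\Sigma^\X$ by left $\C$-triangles, $\X$-precovers and $\Omega_\X$, and reversing all arrows. So assume $(\A, \Sigma, \R(\C), \X)$ is a partial right triangulated category and work with the subfactor $\C/\X$, the functor $\Sigma^\X$, and the class $\Delta^\X$.

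First I would check that $\Sigma^\X$ is a well-defined additive endofunctor. Existence of the data $\sigma^f,\kappa^f$ in~(\ref{kappaf}) is automatic: since $i^A$ is an $\X$-monic and $X^B\in\X$, the map $i^Bf$ factors as $\sigma^f i^A$, and then (PRT3) applied to the two fixed triangles supplies $\kappa^f$. The essential point is independence of $\ul{\kappa^f}$ from the choices. Given two completions $(\sigma,\kappa)$ and $(\sigma',\kappa')$, the difference diagram has vertical maps $(0,\sigma-\sigma',\kappa-\kappa',0)$ and is a commutative morphism of right $\C$-triangles whose left vertical is $0$ (which trivially factors through $i^A$); hence (PRT2), applied with $X^B\in\X$ and $s=p^B$, forces $\kappa-\kappa'$ to factor through $p^B$, i.e. through $X^B\in\X$, so $\ul{\kappa}=\ul{\kappa'}$. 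Functoriality and additivity then follow by pasting completions and invoking this uniqueness. Thus $\Sigma^\X$ is a well-defined additive functor, and $\Delta^\X$ is closed under isomorphism by construction.

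Next I would verify the axioms. For (RT1), lift $\ul{f}\colon A\to B$ to $f$ in $\C$; the map $\binom{i^A}{f}\colon A\to X^A\oplus B$ is an $\X$-monic, so (PRT1)(iii) yields a right $\C$-triangle $A\xrto{\binom{i^A}{f}} X^A\oplus B\to N\to\Sigma(A)$, which is standard, and projecting away the summand $X^A\in\X$ turns it into a distinguished triangle $A\xrto{\ul{f}} B\to\ul{N}\to\Sigma^\X(A)$; the trivial triangle $0\to A\xrto{1} A\to 0$ is the image in $\C/\X$ of the split right $\C$-triangle provided by (PRT1)(ii). For (RT3) the only subtlety is that a square commuting in $\C/\X$ need only commute up to a map through $\X$; since the left-hand map $f$ is an $\X$-monic this correction can be absorbed into the representative of one vertical map, after which (PRT3) supplies a fill-in $\gamma$ in $\A$ whose class solves the problem in $\C/\X$.

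The genuine work is in (RT2) and (RT4). For the octahedral axiom (RT4) I would normalize the representatives of the three given triangles so that the two relevant composable maps are $\X$-monic — replacing a map $u$ by $\binom{i}{u}$ alters objects only by summands in $\X$, hence the triangle only up to isomorphism in $\C/\X$ — then apply (PRT4) in $\A$ and read off the octahedron in $\C/\X$, identifying connecting maps through $\Sigma^\X$ and the uniqueness established above. The main obstacle is the rotation axiom (RT2): one must show that rotating the standard triangle of a right $\C$-triangle $A\xrto{f} B\xrto{g} C\xrto{h}\Sigma(A)$ (with $f$ an $\X$-monic) gives a distinguished triangle $B\xrto{\ul{g}} C\xrto{\ul{\xi}(f,g)} U^A\xrto{-\Sigma^\X(\ul{f})} U^B$. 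The strategy is to build, via (RT1), a distinguished triangle completing $\ul{g}$ out of the $\X$-monic $\binom{i^B}{g}$, and then to compare its third vertex with the fixed triangle defining $U^A=\Sigma^\X(A)$ by means of (PRT4) applied to a suitable composable pair of $\X$-monics (note that the composite of $\X$-monics is again an $\X$-monic, and $i^Bf$ is an $\X$-preenvelope). Verifying that this comparison is an isomorphism of right $\Sigma^\X$-sequences — in particular that the two connecting morphisms are exactly $\ul{\xi}(f,g)$ and $-\Sigma^\X(\ul{f})$ with the correct sign — is the delicate part; it rests on the weak-cokernel properties built into the definition of a right $\C$-sequence, on (PRT2) to kill the ambiguities modulo $\X$, and on the uniqueness of $\Sigma^\X$ on morphisms. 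Once rotation is in hand, the remaining bookkeeping is formal.
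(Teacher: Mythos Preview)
The paper does not prove this theorem: it is stated in the preliminaries section as a quotation of \cite[Theorem~3.2]{ZWLi2}, with no proof given here. So there is no ``paper's own proof'' to compare your attempt against.

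That said, your sketch follows the expected route and is broadly correct in outline: well-definedness of $\Sigma^\X$ via (PRT3) for existence and (PRT2) for uniqueness modulo $\X$; (RT1) from (PRT1)(ii)--(iii); (RT3) by absorbing the $\X$-error into the $\X$-monic $f$ and then invoking (PRT3); (RT4) after normalizing both maps to be $\X$-monic and applying (PRT4); and (RT2) as the delicate step handled by building the triangle on $\binom{i^B}{g}$ and comparing via (PRT4) and (PRT2). This is exactly the architecture one would expect in the cited reference. The only place where your description is a bit thin is (RT2): you correctly flag it as the crux, but the actual identification of the connecting map with $-\Sigma^\X(\ul f)$ and of the third object with $\Sigma^\X(A)$ up to isomorphism in $\C/\X$ requires a careful diagram chase (typically one shows that the third vertex of the new triangle differs from $U^A$ only by a summand in $\X$, using that the cone of an $\X$-preenvelope is stably trivial). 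If you want a self-contained proof rather than a sketch, that is the step to write out in full.
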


\section{The homotopy categories of additive categories}

 In this section, we define the homotopy category of an additive category based on the notion of a localization triple.

\subsection*{The localization of categories}
\begin{definition} (\cite{Gabriel/Zisman67}) \ \label{def:localization} Let $\A$ be a category and let $\mathcal{S}$ be a class of morphisms of $\A$. The {\it localization of $\A$ with respect to $\mathcal{S}$} means a category $\A[\mathcal{S}^{-1}]$ together with a functor $\gamma \colon \A\to \A[\mathcal{S}^{-1}]$ such that
\vskip5pt
(i)  $\gamma (s)$ is an isomorphism for each $s\in \mathcal{S}$, and

(ii)  whenever $F\colon \A\to \mathcal{B} $ is a functor carrying elements of $\mathcal{S}$ to isomorphisms, there exists a unique functor $F'\colon \A[\mathcal{S}^{-1}]\to \mathcal{B}$ such that $F'\circ \gamma=F$.
\end{definition}

 \noindent The second condition of Definition \ref{def:localization} shows that any two localizations of $\A$ with respect to $\mathcal{S}$ are canonically isomorphic. A general construction of the category $\A[\mathcal{S}^{-1}]$ is given by Gabriel and Zisman \cite{Gabriel/Zisman67}, but there is a foundational set-theoretic obstruction to its existence.

\subsection*{The homotopy category of an additive category}

Let $\A$ be an additive category. Assume $\C, \D$ and $\X$ are three additive subcategories of $\A$ such that $\X\subseteq \C\cap \D$. We denote by $$\C^{\perp_{\A/\X}}=\{W\in \A \ | \ \Hom_{\A/\X}(\C, W)=0\} \ \mbox{and} \ {^{\perp_{\A/\X}}\D}=\{W\in \A \ | \ \Hom_{\A/\X}(W, \D)=0\}.$$

\begin{definition} \ \label{htriple}The triple $(\C, \X, \D)$ is called a {\it localization triple} of $\A$ if

(a) For each $A\in \A$, there is a fixed sequence $W_A\xrto{\omega_A} Q(A)\xrto{r_A}A$ such that $r_A$ is a $\C$-precover with $\omega_A$ a weak kernel of $r_A$ and $W_A\in \C^{\perp_{\A/\X}}$.

(b) For each $A\in \A$, there is a fixed sequence $A\xrto{j^A} R(A)\xrto{\tau^A} W^A$ such that $i^A$ is a $\D$-preenvelope with $\tau^A$ a weak cokernel of $j^A$ and $W^A\in {^{\perp_{\A/\X}}\D}$.

(c) If $A\in \D$, then $Q(A)\in \C\cap \D$, and if $A\in \C$, then $R(A)\in \C\cap \D$.
\end{definition}

Now we fix a localization triple $(\C, \X, \D)$ in an additive category $\A$.

\begin{lemma} \ \label{lem:creplacement} For any morphism $f\colon A\to B$ in $\A$, there exists a morphism $\check{f}\colon Q(A)\to Q(B)$ such that
the following diagram commutes
\[\xy\xymatrixcolsep{2pc}\xymatrix@C16pt@R16pt{
Q(A)\ar[r]^{\check{f}}\ar[d]_{r_A}&Q(B)\ar[d]^{r_B}\\
A\ar[r]^{f}&B}
\endxy\]
and $\ul{\check{f}}$ is uniquely determined by $\ul{f}$ in the stable category $\A/\X$.
\end{lemma}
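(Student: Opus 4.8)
The plan is to produce $\check f$ by a single lifting argument against the $\C$-precover $r_B$, and then to establish the uniqueness of $\ul{\check f}$ in the stable category $\A/\X$ using the weak-kernel property of $\omega_B$ together with the orthogonality $W_B\in\C^{\perp_{\A/\X}}$.

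First I would construct $\check f$. By Definition \ref{htriple}(a) the object $Q(A)$ lies in $\C$ (it is the source of the $\C$-precover $r_A$), and $r_B\colon Q(B)\to B$ is a $\C$-precover, hence $\C$-epic, so the induced map $\Hom_\A(\C, r_B)$ is surjective. Applying this to the morphism $f\circ r_A\colon Q(A)\to B$, whose source lies in $\C$, yields a morphism $\check f\colon Q(A)\to Q(B)$ with $r_B\check f=f r_A$, which is exactly the asserted commutative square.

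Next I would show $\ul{\check f}$ is independent of the chosen lift. If $\check f'$ is another such lift, put $g=\check f-\check f'$; then $r_B g=0$, so by the weak-kernel property of $\omega_B$ the morphism $g$ factors as $g=\omega_B h$ for some $h\colon Q(A)\to W_B$. Since $Q(A)\in\C$ and $W_B\in\C^{\perp_{\A/\X}}$, we have $\Hom_{\A/\X}(Q(A),W_B)=0$, whence $\ul h=0$ and therefore $\ul g=0$, i.e. $\ul{\check f}=\ul{\check f'}$.

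Finally I would verify that $\ul{\check f}$ depends only on $\ul f$. Suppose $\ul{f_1}=\ul{f_2}$, so $f_1-f_2=b a$ with $a\colon A\to X$, $b\colon X\to B$ and $X\in\X\subseteq\C$. Lifting $b$ through the $\C$-epic $r_B$ gives $\tilde b\colon X\to Q(B)$ with $r_B\tilde b=b$; then $\tilde b\, a\, r_A\colon Q(A)\to Q(B)$ is a lift of $(f_1-f_2)r_A$ that factors through $X\in\X$, hence vanishes in $\A/\X$. By the uniqueness just proved, $\ul{\check f_1}-\ul{\check f_2}=\ul{\tilde b\, a\, r_A}=0$. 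The main obstacle, and the only place where both conditions of the localization triple are genuinely used, is this last identification: the weak-kernel hypothesis controls the morphisms into $Q(B)$ killed by $r_B$, while the orthogonality $W_B\in\C^{\perp_{\A/\X}}$ is precisely what collapses the residual ambiguity after passing to $\A/\X$.
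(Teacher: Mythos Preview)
Your proof is correct and follows essentially the same approach as the paper: construct $\check f$ via the $\C$-epic property of $r_B$, and establish uniqueness using the weak-kernel property of $\omega_B$ together with $W_B\in\C^{\perp_{\A/\X}}$. The only structural difference is that you separate the argument into two stages (first showing independence of the lift for a fixed $f$, then showing dependence only on $\ul f$), whereas the paper handles both at once by starting from $\ul f=\ul g$ and directly comparing arbitrary lifts $\check f,\check g$; the ingredients and logic are identical.
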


\begin{proof} \ Apply $\Hom_\A(Q(A),-)$ to the fixed sequence $W_B\xrto{\omega_B} Q(B)\xrto{r_B} B$, we obtain a right exact sequence
\begin{equation*}
\Hom_\A(Q(A), W_B)\xrto{(\omega_{B})_*} \Hom_\C(Q(A), Q(B))\xrto{(r_{B})_*} \Hom_{\A}(Q(A), B)\to 0.
\end{equation*}
Then for $f\circ r_A\in \Hom_\A(Q(A), B)$, there exists a morphism $\check{f}\colon Q(A)\to Q(B)$ such that $f\circ r_A=r_B\circ \check{f}$. 
If there is another morphism $g\colon A\to B$ such that $\ul{g}=\ul{f}$ in $\A/\X$, then we can factor $f-g$ as $A\xrto{t} X\xrto{s} B $ with some $X\in \X$. Since $f\circ r_A=r_B\circ \check{f}$ and $g\circ r_A=r_B\circ \check{g}$, we have $(f-g)\circ r_A=r_B\circ (\check{f}-\check{g})$. Since $X\in \X\subseteq \C$, there is a morphism $h\colon X\to Q(B)$ such that $s=r_B\circ h$ since $r_B$ is a $\C$-epic. Then $r_B\circ h\circ t\circ r_A=s\circ t\circ r_A=(f-g)\circ r_A=r_B\circ (\check{f}-\check{g})$. Thus there is a morphism $l\colon Q(A)\to W_B$ such that $(\check{f}-\check{g})-h\circ t\circ r_A=\omega_B\circ l$ since $\omega_B$ is a weak kernel of $r_B$. Therefore $\ul{\check{f}}-\ul{\check{g}}=\ul{\omega}_B\circ \ul{l}+\ul{h}\circ \ul{t}\circ \ul{r}_A=0$ in $\C/\X$.\end{proof}

Dually, we have
\begin{lemma} \ \label{lem:freplacement} For any morphism $f\colon A\to B$ in $\A$, there exists a morphism $\hat{f}\colon R(A)\to R(B)$ such that the following diagram commutes
\[\xy\xymatrixcolsep{2pc}\xymatrix@C16pt@R16pt{
A\ar[r]^f\ar[d]_{j^A}&B\ar[d]^{j^B}\\
R(A)\ar[r]^{\hat{f}}&R(B)}
\endxy\]
and $\ul{\hat{f}}$ is uniquely determined by $\ul{f}$ in the stable category $\A/\X$.
\end{lemma}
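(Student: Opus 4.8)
The plan is to dualize the proof of Lemma~\ref{lem:creplacement} line for line: replace the fixed $\C$-precover sequences $W_A\xrto{\omega_A}Q(A)\xrto{r_A}A$ by the fixed $\D$-preenvelope sequences $A\xrto{j^A}R(A)\xrto{\tau^A}W^A$, reverse every arrow, and interchange the roles played by $\C^{\perp_{\A/\X}}$ and ${}^{\perp_{\A/\X}}\D$.

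For existence, I would apply the contravariant functor $\Hom_\A(-,R(B))$ to the fixed sequence attached to $A$. Since $j^A$ is a $\D$-preenvelope (hence $\D$-monic) and $R(B)\in\D$ (because $j^B$ is a $\D$-preenvelope), the induced map $(j^A)^*$ is surjective; since $\tau^A$ is a weak cokernel of $j^A$, the three-term sequence
\[
\Hom_\A(W^A,R(B))\xrto{(\tau^A)^*}\Hom_\A(R(A),R(B))\xrto{(j^A)^*}\Hom_\A(A,R(B))\to 0
\]
is exact. Feeding the element $j^B\circ f\in\Hom_\A(A,R(B))$ into the surjection $(j^A)^*$ yields a morphism $\hat f\colon R(A)\to R(B)$ with $\hat f\circ j^A=j^B\circ f$, which is exactly the commutative square in the statement.

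For uniqueness, suppose $\ul g=\ul f$, so that $f-g$ factors as $A\xrto{t}X\xrto{s}B$ with $X\in\X$. As $X\in\X\subseteq\D$ and $j^A$ is $\D$-monic, I can write $t=h\circ j^A$ for some $h\colon R(A)\to X$. A short computation then gives $(\hat f-\hat g-j^B\circ s\circ h)\circ j^A=0$, so the weak-cokernel property of $\tau^A$ provides $l\colon W^A\to R(B)$ with $\hat f-\hat g-j^B\circ s\circ h=l\circ\tau^A$. Passing to the stable category, $\ul{\hat f}-\ul{\hat g}=\ul l\circ\ul{\tau^A}+\ul{j^B}\circ\ul s\circ\ul h$.

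The one point that needs genuine attention, rather than a purely formal dualization, is the vanishing of these two summands in $\A/\X$. The term $\ul{j^B}\circ\ul s\circ\ul h$ factors through $X\in\X$ and is therefore zero by definition of the stable category. For $\ul l\circ\ul{\tau^A}$ the decisive input is the defining property $W^A\in{}^{\perp_{\A/\X}}\D$: precisely because $R(B)\in\D$, we have $\ul l\in\Hom_{\A/\X}(W^A,R(B))=0$, whence $\ul l=0$ and $\ul l\circ\ul{\tau^A}=0$. This mirrors the step in Lemma~\ref{lem:creplacement} where $W_B\in\C^{\perp_{\A/\X}}$ annihilated the analogous lift, and checking that $R(B)\in\D$ (the hypothesis that makes the left-orthogonality of $W^A$ applicable) is the only place where the duality must be verified by hand rather than read off formally. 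This gives $\ul{\hat f}=\ul{\hat g}$ and completes the argument.
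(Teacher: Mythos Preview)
Your proposal is correct and is precisely the intended argument: the paper itself gives no separate proof of this lemma, stating only ``Dually, we have'', so you have faithfully spelled out the dual of the proof of Lemma~\ref{lem:creplacement}. Your explicit justification that $\ul l=0$ because $W^A\in{}^{\perp_{\A/\X}}\D$ and $R(B)\in\D$ is in fact more detailed than what the paper records in the dual case.
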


\begin{remark} \label{functor} \  The uniqueness statement in Lemma \ref{lem:creplacement} implies that we can define a functor $Q\colon \A/\X\to\C/\X$ by sending $A$ to $Q(A)$ and $\ul{f}\colon A\to B$ to $\ul{\check{f}}$, which is a right adjoint of the inclusion $\C/\X\hookrightarrow \A/\X$. Similarly, Lemma \ref{lem:freplacement} implies that we can define a functor $R\colon \A/\X\to \D/\X$ by sending $A$ to $R(A)$ and $\ul{f}$ to $\ul{\hat{f}}$, which is a left adjoint of the inclusion $\D/\X\hookrightarrow \A/\X$.
\end{remark}

Let $\mathrm{Mor}(\A)$ be the class of morphisms in $\A$. Then we define
\begin{equation} \label{we}
\mathcal{S}=\{s\in \mathrm{Mor}(\A) \ | \ RQ(\ul{s}) \ \mbox{is an isomorphism in } \ (\C\cap \D)/\X\}.
 \end{equation}
  For each object $A\in \A$, the morphism $r_A\colon Q(A)\to A$ is in $\mathcal{S}$ and $j^{A}\colon A\to R(A)$ is in $\mathcal{S}$ if $A\in \C$. By Lemmas \ref{lem:creplacement}-\ref{lem:freplacement}, $\mathcal{S}$ satisfies two out of three property.

\begin{lemma}  \label{lefthomotopy} \ Let $\mathcal{B}$ be a category, and let $F\colon \A\to \mathcal{B}$ be a functor which takes elements of $\mathcal{S} $ to isomorphisms. If $f, g\colon A\to B$ are morphisms in $\A$ such that $\ul{f}=\ul{g}$ in $\A/\X$, then $F(f)=F(g)$.
\end{lemma}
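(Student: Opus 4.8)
The plan is to model the relation $\ul{f}=\ul{g}$ by an explicit ``cylinder'' inside the additive category $\A$ and then to exploit only that $F$ inverts $\mathcal{S}$, carefully avoiding any appeal to additivity of $F$ (recall $\mathcal{B}$ is merely a category). First I would unwind the hypothesis: since $\ul{f}=\ul{g}$ in $\A/\X$, the morphism $f-g$ lies in $\X(A,B)$, hence factors through an object of $\X$; because $\X$ is additive we may take a single such object, so there are morphisms $t\colon A\to X$ and $s\colon X\to B$ with $X\in\X$ and $f-g=s\circ t$.

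Next I would build the cylinder $A\oplus X$ equipped with the two ``end inclusions''
$i_0=\left(\begin{smallmatrix} 1\\ 0\end{smallmatrix}\right)$ and $i_1=\left(\begin{smallmatrix} 1\\ t\end{smallmatrix}\right)$ from $A$ to $A\oplus X$, the projection $p=(1,0)\colon A\oplus X\to A$, and the ``homotopy'' $H=(g,\,s)\colon A\oplus X\to B$. A direct matrix computation gives $p\circ i_0=1_A=p\circ i_1$, together with $H\circ i_0=g$ and $H\circ i_1=g+s\circ t=f$. Consequently, once we know $F(i_0)=F(i_1)$ we are finished, since then $F(f)=F(H)\circ F(i_1)=F(H)\circ F(i_0)=F(g)$ by functoriality alone.

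The crux is therefore to establish $p\in\mathcal{S}$, for then $F(p)$ is an isomorphism and the two relations $F(p)\circ F(i_0)=1_{F(A)}=F(p)\circ F(i_1)$ force $F(i_0)=F(i_1)$ after cancelling the invertible $F(p)$. To see that $p\in\mathcal{S}$ I would observe that $\ul{p}$ is already an isomorphism in $\A/\X$ with inverse $\ul{i_0}$: indeed $p\circ i_0=1_A$ holds on the nose, while $i_0\circ p-1_{A\oplus X}$ factors through the direct summand $X\in\X$, hence lies in $\X(A\oplus X,A\oplus X)$ and vanishes in $\A/\X$. Since $RQ\colon \A/\X\to(\C\cap\D)/\X$ is a functor it preserves isomorphisms, so $RQ(\ul{p})$ is an isomorphism in $(\C\cap\D)/\X$, and thus $p\in\mathcal{S}$ by the definition \eqref{we}.

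I do not anticipate a genuine obstacle: the only real content is selecting the correct cylinder $A\oplus X$ and homotopy $H$, after which everything reduces to matrix identities together with the remark that any morphism inducing an isomorphism in $\A/\X$ automatically belongs to $\mathcal{S}$. The single point deserving care is that the conclusion must be drawn by left-cancelling the isomorphism $F(p)$, rather than by manipulating a (nonexistent) difference $F(f)-F(g)$, so that the argument remains valid for an arbitrary target category $\mathcal{B}$.
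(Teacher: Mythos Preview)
Your argument is correct and is essentially the same cylinder trick as in the paper, only carried out on the dual side: you build the cylinder $A\oplus X$ on the \emph{source} and cancel the invertible $F(p)$ on the left, whereas the paper builds $B\oplus X$ on the \emph{target}, observes that $\left(\begin{smallmatrix}1_B\\0\end{smallmatrix}\right)\colon B\to B\oplus X$ lies in $\mathcal{S}$, and cancels it on the right to conclude $F(1_B,s)=F(1_B,0)$, after which $F(f)=F(1_B,s)\circ F\!\left(\begin{smallmatrix}g\\t\end{smallmatrix}\right)=F(1_B,0)\circ F\!\left(\begin{smallmatrix}g\\t\end{smallmatrix}\right)=F(g)$. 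Both versions hinge on the same observation (which you make explicit and the paper leaves implicit): a split inclusion/projection involving a summand in $\X$ becomes an isomorphism in $\A/\X$ and hence lies in $\mathcal{S}$.
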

\begin{proof} \ Since $\ul{f}=\ul{g}$ in $\A/\X$, there is a factorization of $f-g$: $A\stackrel{v}\to X\stackrel{u}\to B$ for some $X\in \X$. Since $\left(\begin{smallmatrix}
1_B \\
0
\end{smallmatrix}\right)\colon B\to B\oplus X$ is in $\mathcal{S}$, we know that $F\left(\begin{smallmatrix}
1_B \\
0
\end{smallmatrix}\right)$ is an isomorphism by assumption. Thus $F(1_B, u)=F(1_B, 0)$ since $F(1_B, u)\circ F\left(\begin{smallmatrix}
1_B \\
0
\end{smallmatrix}\right)=F(1_B)=F(1_B,0)\circ F\left(\begin{smallmatrix}
1_B \\
0
\end{smallmatrix}\right)$. Then we have $F(f)=F(1_B, u)\circ F\left(\begin{smallmatrix}
g\\
v
\end{smallmatrix}\right)=F(1_B, 0)\circ F\left(\begin{smallmatrix}
g\\
v
\end{smallmatrix}\right)=F(g)$.
\end{proof}

\begin{theorem} \label{thm:localization} \ Let $(\C, \X, \D)$ be a localization triple in an additive category $\A$. Then the localization $\gamma\colon \A\to \A[\mathcal{S}^{-1}]$ of $\A$ with respect to $\mathcal{S}$ as defined in $(\ref{we})$ exists.
\end{theorem}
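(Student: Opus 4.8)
The plan is to realize the localization concretely as the locally small subfactor category $(\C\cap\D)/\X$, thereby sidestepping the set-theoretic obstruction. To this end I would consider the composite
\[
\tilde\gamma\colon \A \xrightarrow{\ \pi\ } \A/\X \xrightarrow{\ Q\ } \C/\X \xrightarrow{\ R\ } (\C\cap\D)/\X ,
\]
where $\pi$ is the canonical projection and $Q,R$ are the adjoint functors of Remark \ref{functor}. The point that makes the third arrow well defined is condition (c) of Definition \ref{htriple}: since $Q(A)\in\C$ for every $A$, condition (c) gives $R(Q(A))\in\C\cap\D$, so $R$ restricts to a functor $\C/\X\to(\C\cap\D)/\X$. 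By the very definition \eqref{we} of $\mathcal{S}$, the functor $\tilde\gamma$ carries every $s\in\mathcal{S}$ to an isomorphism, so it remains only to check that $\bigl((\C\cap\D)/\X,\tilde\gamma\bigr)$ enjoys the universal property of Definition \ref{def:localization}.

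I would verify this universal property by peeling off the three arrows in turn. Let $F\colon\A\to\B$ invert $\mathcal{S}$. Since the split monomorphisms $\binom{1}{0}\colon B\to B\oplus X$ (with $X\in\X$) lie in $\mathcal{S}$, Lemma \ref{lefthomotopy} produces a factorization $F=\bar F\circ\pi$ through $\A/\X$. Next, because $\C/\X\hookrightarrow\A/\X$ is coreflective with coreflection $Q$, the functor $Q$ is a localization at $\{\,\ul f : Q(\ul f)\text{ is invertible}\,\}$; as $Q(\ul f)$ invertible forces $RQ(\ul f)$ invertible, every such $\ul f$ is $\pi(s)$ for some $s\in\mathcal{S}$, so $\bar F$ inverts this class and factors as $\bar F\cong\hat F\circ Q$. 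Finally, condition (c) shows that $R$ restricts to a \emph{left} adjoint of the inclusion $(\C\cap\D)/\X\hookrightarrow\C/\X$, i.e.\ a reflection, hence a localization at $\{\,g:R(g)\text{ invertible}\,\}$; using that $Q$ is naturally isomorphic to the identity on $\C/\X$ one checks $\hat F$ inverts this class and factors as $\hat F\cong F'\circ R$. Composing the three factorizations yields $F'\circ\tilde\gamma\cong F$, and chasing uniqueness through the three localization steps pins down $F'$ up to natural isomorphism.

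Since the morphisms inverted by the composite $\tilde\gamma$ are precisely $\{\,f : RQ(\ul f)\text{ is invertible in }(\C\cap\D)/\X\,\}=\mathcal{S}$, the pair $\bigl((\C\cap\D)/\X,\tilde\gamma\bigr)$ satisfies Definition \ref{def:localization}. The category $(\C\cap\D)/\X$ is locally small, its Hom groups being subquotients of the Hom groups of $\A$; transporting the universal property along the resulting equivalence shows that $\A[\mathcal{S}^{-1}]$ exists and that $\A[\mathcal{S}^{-1}]\simeq(\C\cap\D)/\X$.

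The step I expect to be the main obstacle is the middle one: proving rigorously that $R$ restricts to a reflection onto $(\C\cap\D)/\X$, and that the three classes of inverted morphisms chain together to recover exactly $\mathcal{S}$—neither too few nor too many maps being inverted at each stage. This is precisely where condition (c) of the localization triple is indispensable, since it is what forces $Q$ and $R$ to cooperate so that $RQ$ lands in $\C\cap\D$. A secondary, more bookkeeping-type difficulty is reconciling the ``up to natural isomorphism'' factorizations produced at each stage with the strict formulation of Definition \ref{def:localization}; this is handled by the standard fact that the Gabriel--Zisman localization, once it is known to be locally small, is determined up to canonical isomorphism and may be transported along the equivalence with $(\C\cap\D)/\X$.
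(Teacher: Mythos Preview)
Your strategy is correct and arrives at the same target as the paper---namely, that $(\C\cap\D)/\X$ realizes the localization---but the route is genuinely different. The paper does not invoke the general fact that (co)reflective inclusions are localizations. Instead it keeps the objects of $\A$ and \emph{defines} $\A[\mathcal{S}^{-1}]$ to have $\Hom_{\A[\mathcal{S}^{-1}]}(A,B)=\Hom_{(\C\cap\D)/\X}(RQ(A),RQ(B))$, with $\gamma$ the identity on objects and $\gamma(f)=RQ(\ul f)$. It then constructs the factorization $F'$ by hand: given $F$ inverting $\mathcal{S}$, one sets $F'(A)=F(A)$ and, for a morphism represented by $f'\colon RQ(A)\to RQ(B)$ in $\C\cap\D$, puts $F'(f)=F(r_B)\,F(j^{Q(B)})^{-1}\,F(f')\,F(j^{Q(A)})\,F(r_A)^{-1}$, using that $r_A$ and $j^{Q(A)}$ lie in $\mathcal{S}$. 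Lemma~\ref{lefthomotopy} guarantees this is independent of the representative $f'$, and the explicit formula yields the \emph{strict} equality $F'\circ\gamma=F$ demanded by Definition~\ref{def:localization}.

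Your argument, by contrast, peels off $\pi$, then $Q$, then $R$, using at each stage that a fully faithful adjoint makes the other adjoint a localization. This is cleaner conceptually and makes transparent why condition~(c) is exactly what is needed (it lets $R$ restrict to a genuine reflection $\C/\X\to(\C\cap\D)/\X$). The cost, which you correctly flag, is that the adjunction argument only produces $F'\circ\tilde\gamma\cong F$ via the unit/counit isomorphisms, and one must then either relax Definition~\ref{def:localization} to the bicategorical version or transport along the equivalence to recover a strict model. The paper's object-preserving construction sidesteps this bookkeeping entirely. Your worry about ``neither too few nor too many maps being inverted'' is in fact a non-issue: by the very definition~\eqref{we}, the class inverted by $RQ\pi$ is \emph{exactly} $\mathcal{S}$, and for the intermediate steps you only need that $\bar F$ inverts the counits $\ul r_A$ and $\hat F$ inverts the units $\ul j^A$ (for $A\in\C$), both of which lie in $\mathcal{S}$ as the paper observes just before Lemma~\ref{lefthomotopy}.
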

\begin{proof} \  We define a category $\A[\mathcal{S}^{-1}]$ as follows, it has the same objects as $\A$ and for $A, B\in \A$,  $$\Hom_{\A[\mathcal{S}^{-1}]}(A, B)=\Hom_{(\C\cap \D)/\X}(R\circ Q(A), R\circ Q(B))$$
where $R, Q$ are defined in Remark \ref{functor}.
Then there is a functor $\gamma\colon \A\to \A[\mathcal{S}^{-1}]$ which is identity on objects and sends a morphism $f\colon A\to B$ to $R\circ Q(\ul{f}) =\ul{\hat{\check{f}}}\colon R(Q(B))\to R(Q(A))$. By Remark \ref{functor}, $\gamma$ is a functor which sends elements of $\mathcal{S}$ defined as in (\ref{we}) to isomorphisms. Let $F\colon \A\to \mathcal{B}$ be a functor that takes elements of $\mathcal{S}$ to isomorphisms in $\mathcal{B}$. We will show that there is a unique functor $F'\colon \A[\mathcal{S}^{-1}]\to \mathcal{B}$ such that $F'\circ \gamma=F$ and then $\gamma\colon \A\to \A[\mathcal{S}^{-1}]$ is a localization $\A[\mathcal{S}^{-1}]$ of $\A$ with respect to $\mathcal{S}$.

 For every object $A$ in $\A[\mathcal{S}^{-1}]$, we let $F'(A)=F(A)$. If $f\colon A\to B$ is a morphism in $\A[\mathcal{S}^{-1}]$, then $f$ can be extended to a morphism $\gamma(j^{Q(B)})\circ \gamma(r_B)^{-1}\circ f \circ \gamma(r_A)\circ \gamma(j^{Q(A)})^{-1}\colon R(Q(A))\to R(Q(B))$ in $\A[\mathcal{S}^{-1}]$. Thus there is some morphism $f'\colon R(Q(B))\to R(Q(A))$ in $\C\cap \D$ such that $\gamma(j^{Q(B)})\circ \gamma(r_B)^{-1}\circ f \circ \gamma(r_A)\circ \gamma(j^{Q(A)})^{-1}=\ul{f}'=\gamma(f')$. Then $f=\gamma(r_{B})\circ \gamma(j^{Q(B)})^{-1}\circ \gamma(f') \circ \gamma(j^{Q(A)})\circ \gamma(r_{A})^{-1}$. By Lemma \ref{lefthomotopy}, $F(f')$ only depends on $\ul{f}'$, and therefore only on $f$, so we can define
 $$F'(f)=F(r_B)\circ F(j^{Q(B)})^{-1}\circ F(f')\circ F(j^{Q(A)})\circ F(r_A)^{-1}.$$
 If $f\colon A\to A$ is the identity morphism in $\A[\mathcal{S}^{-1}]$, then we can take $f'=1_{R(Q(A))}$ by Lemmas \ref{lem:creplacement}-\ref{lem:freplacement} and thus $F'(f)=1_{F'(A)}$. Given two composable morphisms $f\colon A\to B$ and $g\colon B\to C$ in $\A[\mathcal{S}^{-1}]$, by the above proof, there are morphisms $f'\colon R(Q(A))\to R(Q(B))$ and $g'\colon R(Q(B))\to R(Q(C))$ in $\C\cap \D$ such that $f=\gamma(r_{B})\circ \gamma(j^{Q(B)})^{-1}\circ \gamma(f') \circ \gamma(j^{Q(A)})\circ \gamma(r_{A})^{-1}$ and $g=\gamma(r_{C})\circ \gamma(j^{Q(C)})^{-1}\circ \gamma(g') \circ \gamma(j^{Q(B)})\circ \gamma(r_{B})^{-1}$. Then $g\circ f$ can be represented as $\gamma(r_{C})\circ \gamma(j^{Q(C)})^{-1}\circ \gamma(g'\circ f') \circ \gamma(j^{Q(A)})\circ \gamma(r_{A})^{-1}$. Then
 \begin{align*}
 F'(g\circ f)&=F(r_C)\circ F(j^{Q(C)})^{-1}\circ F(g'\circ f')\circ F(j^{Q(A)})\circ F(r_A)^{-1}\\
 &=[F(r_C)\circ F(j^{Q(C)})^{-1}\circ F(g')\circ F(j^{Q(B)})\circ F(r_B)^{-1}] \\
 &\circ [F(r_B)\circ F(j^{Q(B)})^{-1}\circ F(f')\circ F(r^{Q(A)})\circ F(r_A)^{-1}]\\
 &=F'(g)\circ F'(f).
 \end{align*}
 This shows that $F'$ is a functor.

 To see that $F'\circ \gamma=F$, we note that $\gamma$ is the identity on objects, and $F'$ was defined to agree with $F$ on objects. If $f\colon A\to B$ is a morphism in $\A$, by Lemmas \ref{lem:creplacement}-\ref{lem:freplacement} we have a commutative diagram
\[\xy\xymatrixcolsep{2pc}\xymatrix@C16pt@R16pt{
 R(Q(A)) \ar[r]^{\hat{\check{f}}}& R(Q(B))\\
Q(A)\ar[r]^{\check{f}}\ar[d]_{r_A}\ar[u]^{j^{Q(A)}}&Q(B)\ar[d]^{r_B}\ar[u]_{j^{Q(B)}}\\
A\ar[r]^f&B}
\endxy\]
  Since $F$ takes elements of $\mathcal{S}$ to isomorphisms in $\mathcal{B}$, we have
 $$F(f)=F(r_B)\circ F(j^{Q(B)})^{-1}\circ F(\hat{\check{f}})\circ F(j^{Q(A)})\circ F(r_A)^{-1}$$
 which implies $F'\circ \gamma(f)=F(f)$ since $\gamma(f)=\ul{\hat{\check{f}}}$ .

 Since every morphism $f\colon A\to B$ in $\A[\mathcal{S}^{-1}]$ can be represented as a composite $\gamma(r_{B})\circ \gamma(j^{Q(B)})^{-1}\circ \gamma(f') \circ \gamma(j^{Q(A)})\circ \gamma(r_{A})^{-1}$, we know that $F'$ is the unique functor satisfying $F'\circ \gamma=F$.\end{proof}

 The category $\A[\mathcal{S}^{-1}]$ in Theorem \ref{thm:localization} will be called the {\it homotopy category} of the localization triple $(\C, \X, \D)$ and we will use $\Ho(\C,\X,\D)$ to denote it.

\begin{remark} \label{remak:triangleequi}  (i)  Since for any object $A$ in $\Ho(\C,\X,\D)$, $\gamma(j^{Q(A)})\circ \gamma(r_A)^{-1}\colon A\to R(Q(A))$ is an isomorphism and the embedding $E\colon (\C\cap\D)/\X\hookrightarrow \Ho(\C,\X,\D)$ is fully faithful, we know that the embedding $E$ is an equivalence. In particular, $\Ho(\C,\X,\D)$ is an additive category.

(ii) One reason for us not using $(\C\cap \F)/\X$ as the homotopy category of a homotopy triple is since $\C\cap \F$ is inadequate, because many constructions in homotopy theory create objects that may not be in $\C\cap \F$.

\end{remark}

\begin{example}   (i) Let $\A$ be an additive category and $\X$ an additive subcategory of $\A$. Then $(\A, \X, \A)$ is a localization triple and $\mathcal{S}$ is the class of {\it stable equivalences}. Here a morphism $f\colon A\to B$ is said to be a stable equivalence if its image $\ul{f}$ is an isomorphism in $\A/\X$, i.e. there is a morphism $g\colon B\to A$ such that both $g\circ f-1_A$ and $f\circ g-1_B$ factor through some object in $\X$. The corresponding homotopy category is the stable category $\A/\X$.

(ii) Let $R$ be a ring. Take $\A$ to be the category of chain complexes of right $R$-modules, $\C$ the subcategory of DG-projective complexes and $\X$ the subcategory of acyclic DG-projective complexes, then $(\C, \X, \A)$ is a localization triple. In this case, the class $\mathcal{S}$ as defined in (\ref{we}) is just the class of quasi-isomorphisms, thus the corresponding homotopy category is the derived category $\rmD(R)$ of right $R$-modules.
\end{example}

\section{Stabilizing subcategories of partial one-sided triangulated categories}

In this section we introduce the notion of stabilizing subcategories for partial one-sided triangulated categories and show that the stable categories of stabilizing subcategories are one-sided triangulated categories.

\subsection*{Stabilizing subcategories}

Let $(\A, \Sigma, \R(\C), \X)$ be a partial right triangulated category. Assume that $\G$ is an additive subcategory of $\A$ such that $\X\subseteq \G\subseteq \C$. Then $\G$ is said to be {\it special preenveloping} in $\C$ if for each $A\in \C$, there is a right $\C$-triangle $A\xrto{j^A} R(A)\to W^A\to \Sigma(A)$
in $\R(\C)$ such that $j^A$ is a $\G$-preenvelope and $W^A\in {^{\perp_{\C/\X}}\G}$ (we fix one).

If $\G$ is a special preenveloping subcategory of $\C$, then by Remark \ref{functor}, the embedding $E^\C\colon \G/\X\hookrightarrow \C/\X$ has a left adjoint $R\colon \C/\X\to \G/\X$ by sending $A$ to $R(A)$ and $\ul{f}\colon A\to B$ to $\ul{\hat{f}}\colon R(A)\to R(B)$, where $\hat{f}$ is as defined in Lemma \ref{lem:freplacement}.

If $(\A, \Sigma, \R(\C), \X)$ is a partial right triangulated category, the subfactor category $\C/\X$ has a right triangulated structure $(\Sigma^\X, \Delta^\X)$ by Theorem \ref{thm:main}. For any morphism $f\colon A\to B$ in $\C$, there is a right triangle $A\xrto{\ul{f}} B\xrto{\ul{g}} C\xrto{\ul{h}}\Sigma^\X(A)$ in $\Delta^\X$.

\begin{definition} \ \label{stabilizing} Let $(\A, \Sigma, \R(\C),\X)$ be a partial right triangulated category. A special preenveloping subcategory $\G$ of $\C$ is said to be {\it stabilizing} if for any diagram of the form
 \[\xy\xymatrixcolsep{2pc}\xymatrix@C16pt@R16pt{
A\ar[r]^-{\ul{f}}\ar[d]_{\ul{j}^A}&B\ar[r]\ar[d]^{\ul{j}^B}&C\ar[r]\ar@{.>}[d]^{\ul{t}}& \Sigma^\X(A)\ar[d]^{\Sigma^\X(\ul{j}^A)}\\
R(A)\ar[r]^-{\ul{\hat{f}}}&R(B)\ar[r]&D\ar[r]&\Sigma^\X(R(A))}
\endxy\]
where the rows are right triangles in $\Delta^\X$, there is a morphism $\ul{t}\colon C\to D$ such that the whole diagram commutative and $R(\ul{t})$ is an isomorphism in $\G/\X$.
\end{definition}

\begin{remark}\label{rem:rstabilizing} The subcategory $\C$ is always stabilizing in $(\A, \Sigma, \R(\C), \X)$ since we can take $R(A)=A$ for each object $A\in \C$ and the existence of $t$ is guaranteed by \cite[Lemma 2.4 (ii)]{ZWLi2}.
\end{remark}

Now assume that $\G$ is stabilizing in the partial right triangulated category $(\A, \Sigma, \R(\C), \X)$. Let $G^\X=R\circ \Sigma^\X\circ E^\C$. We call the right $G^\X$-sequence $A\xrto{\ul{f}} B\xrto{R(\ul{g})} R(C)\xrto{R(\ul{h})} G^\X(A)$ in $\G/\X$ a {\it standard right triangle} if  $A\xrto{\ul{f}} B\xrto{\ul{g}} C\xrto{\ul{h}}\Sigma^\X(A)$ is a distinguished right triangle in $\Delta^\X$. We use $\Delta^\X(\G)$ to denote the class of right $G^\X$-sequences which are isomorphic to standard right triangles in $\G/\X$.

\begin{lemma} \label{distriangle} If $A\xrto{\ul{f}} B\xrto{\ul{g}} C\xrto{\ul{h}}\Sigma^\X(A)$ is in $\Delta^\X$, then $R(A)\xrto{R(\ul{f})} R(B)\xrto{R(\ul{g})} R(C)\xrto{G^\X(\ul{j}^A)\circ R(\ul{h})}G^\X(R(A))$ is in $\Delta^\X(\G)$.
\end{lemma}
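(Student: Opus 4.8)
The plan is to apply the reflector $R\colon\C/\X\to\G/\X$ to a suitable instance of the stabilizing diagram and read off an isomorphism of right $G^\X$-sequences onto a standard one. Throughout I use that $(\C/\X,\Sigma^\X,\Delta^\X)$ is a right triangulated category by Theorem \ref{thm:main}, and I write $G^\X(\ul{j}^A)$ for $R(\Sigma^\X(\ul{j}^A))$, which is the natural extension of the formula $G^\X=R\circ\Sigma^\X\circ E^\C$ to the morphism $\ul{j}^A\colon A\to R(A)$ of $\C/\X$.

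First I would manufacture the bottom row. By the axiom (RT1) for $(\C/\X,\Sigma^\X,\Delta^\X)$ the morphism $R(\ul{f})\colon R(A)\to R(B)$ extends to a distinguished right triangle $R(A)\xrto{R(\ul{f})} R(B)\xrto{\ul{g}'} D\xrto{\ul{h}'}\Sigma^\X(R(A))$ in $\Delta^\X$. Since $R(A),R(B)\in\G$, its associated standard right triangle $R(A)\xrto{R(\ul{f})} R(B)\xrto{R(\ul{g}')} R(D)\xrto{R(\ul{h}')} G^\X(R(A))$ lies in $\Delta^\X(\G)$ by definition. The left-hand square built from the units, $R(\ul{f})\circ\ul{j}^A=\ul{j}^B\circ\ul{f}$, commutes by the naturality in Lemma \ref{lem:freplacement}, so the stabilizing hypothesis (Definition \ref{stabilizing}) supplies a morphism $\ul{t}\colon C\to D$ making the diagram
\[\xy\xymatrixcolsep{2pc}\xymatrix@C16pt@R16pt{
A\ar[r]^{\ul{f}}\ar[d]_{\ul{j}^A}&B\ar[r]^{\ul{g}}\ar[d]^{\ul{j}^B}&C\ar[r]^-{\ul{h}}\ar@{.>}[d]^{\ul{t}}& \Sigma^\X(A)\ar[d]^{\Sigma^\X(\ul{j}^A)}\\
R(A)\ar[r]^{R(\ul{f})}&R(B)\ar[r]^{\ul{g}'}&D\ar[r]^-{\ul{h}'}&\Sigma^\X(R(A))}
\endxy\]
commute, with $R(\ul{t})$ an isomorphism in $\G/\X$. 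In particular I record $\ul{t}\circ\ul{g}=\ul{g}'\circ\ul{j}^B$ and $\ul{h}'\circ\ul{t}=\Sigma^\X(\ul{j}^A)\circ\ul{h}$ from the two right-hand squares.

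Next I would apply the functor $R$ to this commutative diagram. Choosing the fixed data so that $\ul{j}^Z=1$ and $R(Z)=Z$ for $Z\in\G$ (equivalently, using that $R(\ul{j}^B)$ is an isomorphism and that $\Delta^\X(\G)$ is closed under isomorphism), one gets $R(\ul{j}^B)=1_{R(B)}$. Applying $R$ to the two recorded identities then yields $R(\ul{g}')=R(\ul{t})\circ R(\ul{g})$ and $R(\ul{h}'\circ\ul{t})=R(\Sigma^\X(\ul{j}^A)\circ\ul{h})=G^\X(\ul{j}^A)\circ R(\ul{h})$. Hence the vertical maps $1_{R(A)},\ 1_{R(B)},\ R(\ul{t}),\ 1_{G^\X(R(A))}$ assemble into a morphism of right $G^\X$-sequences from the claimed sequence $R(A)\xrto{R(\ul{f})} R(B)\xrto{R(\ul{g})} R(C)\xrto{G^\X(\ul{j}^A)\circ R(\ul{h})} G^\X(R(A))$ to the standard triangle of the first paragraph.

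Finally, since $R(\ul{t})$ is an isomorphism while the other three verticals are identities, this is an isomorphism of right $G^\X$-sequences; as $\Delta^\X(\G)$ contains the standard triangle and is closed under isomorphism, the claimed sequence lies in $\Delta^\X(\G)$. The step I expect to be the main obstacle is the bookkeeping in the last square: one must verify that the connecting morphism is exactly $G^\X(\ul{j}^A)\circ R(\ul{h})$ and not the naive $R(\ul{h})$. This correction term is forced precisely by the right-hand square $\ul{h}'\circ\ul{t}=\Sigma^\X(\ul{j}^A)\circ\ul{h}$ of the stabilizing diagram together with the reflection identity $R(\ul{j}^B)=1_{R(B)}$, and getting these identifications right (rather than the existence of $\ul{t}$, which is handed to us by the stabilizing property) is where the care is needed.
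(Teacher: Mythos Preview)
Your proof is correct and follows essentially the same approach as the paper: extend $R(\ul{f})$ to a distinguished right triangle in $\Delta^\X$, invoke the stabilizing property to obtain $\ul{t}$ with $R(\ul{t})$ an isomorphism, then apply $R$ to produce an isomorphism of right $G^\X$-sequences onto a standard one. Your discussion of the correction term $G^\X(\ul{j}^A)$ in the connecting morphism is a bit more explicit than the paper's version, but the argument is the same.
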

\begin{proof} \ Assume that the corresponding distinguished right triangle of $R(\ul{f})=\ul{\hat{f}}\colon R(A)\to R(B)$ is $R(A)\xrto{\ul{\hat{f}}} R(B)\xrto{\ul{u}} D\xrto{\ul{v}}\Sigma^\X(A)$ in $\C/\X$. Since $\G$ is stabilizing, there is a morphism $\ul{t}\colon C\to D$ such that $R(\ul{t})$ is an isomorphism in $\G/\X$ and the following diagram of distinguished right triangles is commutative:
\[\xy\xymatrixcolsep{2pc}\xymatrix@C16pt@R16pt{
A\ar[r]^-{\ul{f}}\ar[d]_{\ul{j}^A}&B\ar[r]^{\ul{g}}\ar[d]^{\ul{j}^B}&C\ar[r]^-{\ul{h}}\ar[d]^{\ul{t}}& \Sigma^\X(A)\ar[d]^{\Sigma^\X(\ul{j}^A)}\\
R(A)\ar[r]^-{\ul{\hat{f}}}&R(B)\ar[r]^{\ul{u}}&D\ar[r]^-{\ul{v}}&\Sigma^\X(R(A))}
\endxy\]
Then we have a commutative diagram of $G^\X$-sequences in $\G/\X$
\[\xy\xymatrixcolsep{2pc}\xymatrix@C38pt@R16pt{
R(A)\ar[r]^-{R(\ul{f})}\ar@{=}[d]&R(B)\ar[r]^-{R(\ul{g})}\ar@{=}[d]&R(C)\ar[r]^-{G^\X(\ul{j}^A)\circ R(\ul{h})}\ar[d]^{R(\ul{t})}& G^\X(R(A))\ar@{=}[d]\\
R(A)\ar[r]^-{R(\ul{f})}&R(B)\ar[r]^-{R(\ul{u})}&R(D)\ar[r]^-{R(\ul{v})}&G^\X(R(A))}
\endxy\]
Since the second row is a standard right triangle in  $\G/\X$ and $R(\ul{t})$ is an isomorphism, we know that the first row is in $\Delta^\X(\G)$.
\end{proof}

\begin{proposition} \label{rstabilizing} Let $(\A, \Sigma, \R(\C),\X)$ be a partial right triangulated category with $\G$ a stabilizing subcategory. Then $(\G/\X, G^\X, \Delta^\X(\G))$ is a right triangulated category.
\end{proposition}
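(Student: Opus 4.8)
The plan is to transport the right triangulated structure $(\C/\X, \Sigma^\X, \Delta^\X)$ supplied by Theorem \ref{thm:main} along the reflector $R\colon \C/\X\to\G/\X$, which is the left adjoint of the fully faithful inclusion $E^\C\colon \G/\X\hookrightarrow\C/\X$. Two standing facts will be used throughout. First, since $E^\C$ is fully faithful and $R$ is its left adjoint, the unit $\ul{j}^A\colon A\to R(A)$ is an isomorphism whenever $A\in\G$, and $R$ inverts every $\ul{j}^A$. Second, by Lemma \ref{distriangle} the functor $R$ carries each distinguished triangle of $\Delta^\X$ into $\Delta^\X(\G)$; combined with the definition of a standard right triangle and the identifications $R(A)\cong A$, $G^\X(\ul{j}^A)$ an isomorphism for $A\in\G$, this shows that $\Delta^\X(\G)$ is exactly the closure under isomorphism of $\{R(T)\mid T\in\Delta^\X\}$. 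Closure under isomorphism holds by definition and $G^\X=R\circ\Sigma^\X\circ E^\C$ is additive, so it remains to verify the four axioms of Definition \ref{def:rtricat}.

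For (RT1), given $\ul{\phi}\colon M\to N$ in $\G/\X$ I regard it as a morphism of $\C/\X$ (both $M,N\in\G$), complete it via (RT1) in $\C/\X$ to a distinguished triangle $M\xrto{\ul{\phi}}N\xrto{\ul{g}}C\xrto{\ul{h}}\Sigma^\X(M)$, and apply $R$; using $R(M)\cong M$ and $R(N)\cong N$ this exhibits $\ul{\phi}$ as the first morphism of a sequence in $\Delta^\X(\G)$, and the trivial triangle $0\to M\xrto{1_M}M\to 0$ is obtained the same way from $R(0)=0$. For (RT2) I rotate a distinguished triangle $T$ of $\C/\X$ to $T^{\mathrm{rot}}\in\Delta^\X$ by (RT2) in $\C/\X$ and check that $R(T^{\mathrm{rot}})$ is isomorphic to the rotation of $R(T)$. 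The only real point here is bookkeeping of the connecting morphism: the third map produced by Lemma \ref{distriangle} for $T^{\mathrm{rot}}$ equals $G^\X(\ul{j}^{B})\circ R(-\Sigma^\X(\ul{f}))=-R\Sigma^\X(\ul{j}^{B}\circ\ul{f})$, while composing the expected rotation map $-G^\X(\ul{f})$ with the isomorphism $G^\X(\ul{j}^{B})$ yields the same morphism; hence the two sequences agree up to the isomorphism $\ul{j}^{B}$ on the last term, and the sign matches.

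For (RT3) the transport is direct: the base morphisms $\ul{\alpha}\colon A\to A'$ and $\ul{\beta}\colon B\to B'$ between the first two terms of two standard triangles are already morphisms of $\C/\X$ (their sources and targets lie in $\G\subseteq\C$), so (RT3) in $\C/\X$ furnishes a fill-in $\ul{\gamma}\colon C\to C'$ of the underlying $\C$-triangles; applying $R$ gives $R(\ul{\gamma})\colon R(C)\to R(C')$, and functoriality of $R$ together with the identity $G^\X(\ul{\alpha})=R\Sigma^\X(\ul{\alpha})$ makes all three squares of the resulting morphism of $\G$-triangles commute. Axiom (RT4) follows in the same spirit: the three given standard triangles arise from distinguished triangles of $\C/\X$ built on $\ul{f}\colon A\to B$, $\ul{g}\colon B\to C$ and $\ul{g}\circ\ul{f}$ with $A,B,C\in\G$, so I apply the octahedral axiom (RT4) in $\C/\X$ to obtain the full octahedral diagram there and then apply $R$ termwise, Lemma \ref{distriangle} guaranteeing that the distinguished column lands in $\Delta^\X(\G)$ and functoriality guaranteeing commutativity. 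I expect (RT4) to be the main obstacle, not because a new idea is needed but because of the accumulated bookkeeping: each connecting morphism acquires a factor $G^\X(\ul{j}^{(-)})$ coming from Lemma \ref{distriangle}, and one must check that these unit factors cancel consistently (using that every $\ul{j}$ is inverted by $R$ and is an isomorphism on the $\G$-objects) so that the diagram produced by $R$ is literally an octahedral diagram for $(\G/\X, G^\X, \Delta^\X(\G))$.
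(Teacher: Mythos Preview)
Your proposal is correct and follows essentially the same route as the paper: each axiom (RT1)--(RT4) is verified by lifting to the known right triangulated structure on $\C/\X$ from Theorem~\ref{thm:main}, applying the corresponding axiom there, and then pushing forward via $R$ using Lemma~\ref{distriangle}. Your framing in terms of transport along the reflector, together with the explicit identification of $\Delta^\X(\G)$ as the isomorphism closure of $R$-images of $\Delta^\X$, is a slightly more conceptual packaging of the same argument, and your attention to the unit factors $G^\X(\ul{j}^{(-)})$ in (RT2) and (RT4) matches what the paper leaves implicit (using that $\ul{j}^A$ is an isomorphism for $A\in\G$).
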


\begin{proof}  (i)  We verify (RT1)-(RT4) of Definition \ref{def:rtricat} one by one.
	
	(RT1)   For each object $A\in \G$, there is a standard right triangle $0\to A\stackrel{1}\to A\to G^\X(0)$ induced by the right triangle $0\to A\xrto{1} A\to \Sigma^\X(0)$ in $\Delta^\X$. By the definition of a standard right triangle in $\G/\X$, given any morphism $f \colon A\to B$ in $\G$, there is a standard right triangle $A\xrto{\ul{f}} B\to R(C)\to G^\X(A)$.
	
	(RT2)   Let $A\xrto{\ul{f}} B\xrto{R(\ul{g})} R(C)\xrto{R(\ul{h})} G^\X(A)$ be a standard right triangle induced by the distinguished right triangle $A\xrto{\ul{f}} B\xrto{\ul{g}} C\xrto{\ul{h}}\Sigma^\X(A)$ in $\Delta^\X$. Then $B\xrto{\ul{g}}C\xrto{\ul{h}}\Sigma^\X(A)\xrto{-\Sigma^\X(\ul{f})}\Sigma^\X(B)$ is a right triangle in $\Delta^\X$. Thus $B\xrto{R(\ul{g})}R(C)\xrto{R(\ul{h})}G^\X(A)\xrto{-G^\X(\ul{f})}G^\X(B)$ is in $\Delta^\X(\G)$ by Lemma \ref{distriangle}.

(RT3)   Assume that we have a diagram of standard right triangles in $\G/\X$
\[\xy\xymatrixcolsep{2pc}\xymatrix@C16pt@R16pt{
A\ar[r]^-{\ul{f}}\ar[d]_{\ul{\alpha}}&B\ar[r]^-{R(\ul{g})}\ar[d]^{\ul{\beta}}&R(C)\ar[r]^-{R(\ul{h})}& G^\X(A)\ar[d]^{G^\X(\ul{\alpha})}\\
A'\ar[r]^-{\ul{f}'}&B'\ar[r]^-{R(\ul{g}')}&R(C')\ar[r]^-{R(\ul{h}')}&G^\X(A')}
\endxy\]
with the leftmost square commutative. Then there is a morphism $t\colon C\to C'$ making the following diagram of right triangles commutative
		\[\xy\xymatrixcolsep{2pc}\xymatrix@C16pt@R16pt{
A\ar[r]^-{\ul{f}}\ar[d]_{\ul{\alpha}}&B\ar[r]^-{\ul{g}}\ar[d]^{\ul{\beta}}&C\ar[r]^-{\ul{h}}\ar[d]^{t}& \Sigma^\X(A)\ar[d]^{\Sigma^\X(\alpha)}\\
A'\ar[r]^-{\ul{f}'}&B'\ar[r]^-{\ul{g}'}&C'\ar[r]^-{\ul{h}'}&\Sigma^\X(A')}
\endxy\]	by the (RT3) axiom of $\Delta^\X$. Thus $R(\ul{t}): R(C)\to R(C')$ is the desired filler.

(RT4). Assume that we have three standard right triangles $A\xrto{\ul{f}}B\xrto{R(\ul{l})} R(C')\xrto{R(\ul{m})}G^\X(A)$, $B\xrto{R(\ul{g})} C\xrto{R(\ul{h})} R(A')\xrto{R(\ul{n})} G^\X(B)$ and $A\xrightarrow{\ul{g\circ f}} C\xrto{R(\ul{u})}R(B')\xrightarrow{\ul{v}}G^\X(A)$ in $\G/\X$. Then we have a commutative diagram of distinguished right triangles in $\C/\X$ by the (RT4) axiom of $\Delta^\X$:
	\[\xy\xymatrixcolsep{2pc}\xymatrix@C16pt@R16pt{
A\ar[r]^-{\ul{f}}\ar@{=}[d]&B\ar[r]^-{\ul{l}}\ar[d]^{\ul{g}}& C'\ar[r]^-{\ul{m}}\ar[d]^{\ul{r}}& \Sigma^\X(A)\ar@{=}[d]\\
A\ar[r]^-{\ul{g\circ f}}&C\ar[r]^-{\ul{u}}\ar[d]^{\ul{h}}&B'\ar[r]^-{\ul{v}}\ar[d]^{\ul{s}}&\Sigma^\X(A)\ar[d]^{\Sigma^\X(\ul{f})}\\
&A'\ar@{=}[r]\ar[d]^{\ul{n}}&A'\ar[r]^-{\ul{n}}\ar[d]^{\ul{t}}&\Sigma^\X(B)\\
&\Sigma^\X(B)\ar[r]^-{\Sigma^\X(\ul{l})}&\Sigma^\X(C')}
\endxy\]
	such that the second column from the right is in $\Delta^\X$. Thus we have a commutative diagram in $\G/\X$:
	\[\xy\xymatrixcolsep{2pc}\xymatrix@C42pt@R16pt{
A\ar[r]^-{\ul{f}}\ar@{=}[d]&B\ar[r]^-{R(\ul{l})}\ar[d]^{\ul{g}}&R(C')\ar[r]^-{R(\ul{m})}\ar[d]^{R(\ul{r})}& G^\X(A)\ar@{=}[d]\\
A\ar[r]^-{\ul{g\circ f}}&C\ar[r]^-{R(\ul{u})}\ar[d]^{R(\ul{h})}&R(B')\ar[r]^-{R(\ul{v})}\ar[d]^{R(\ul{s})}&G^\X(A)\ar[d]^{G^\X(\ul{f})}\\
&R(A')\ar@{=}[r]\ar[d]^{R(\ul{n})}&R(A')\ar[r]^-{R(\ul{n})}\ar[d]^{R(\Sigma^\X(\ul{j}^{C'})\circ \ul{t})}&G^\X(B)\\
&G^\X(B)\ar[r]^-{R(\Sigma^\X(\ul{j}^{C'})\circ \ul{l})}&G^\X(R(C'))}
\endxy\]
By Lemma \ref{distriangle}, the second column from the right is in $\Delta^\X(\G)$.
\end{proof}
Dually, let $(\A, \Omega, \rmL(\F), \X)$ be a partial left triangulated category and $\G$ an additive subcategory of $\A$ satisfying $\X\subseteq \G\subseteq \F$. Then $\G$ is said to be {\it special precovering} in $\F$ if for each $A\in \F$, there is a left $\F$-triangle $\Omega(A)\to W_A\xrto{\omega_A} Q(A)\xrto{r_A} A$
such that $r_A$ is a $\G$-precover with $\omega_A$ as a weak kernel and $W_A\in \G^{\perp_{\F/\X}}$ (we fix one). If $\G$ is a special precovering subcategory of $\F$, then by Remark \ref{functor}, the embedding $E_\F\colon \G/\X\hookrightarrow \F/\X$ has a right adjoint $Q\colon \F/\X\to \G/\X$ by sending $A$ to $Q(A)$ and $\ul{f}\colon A\to B$ to $\ul{\check{f}}\colon Q(A)\to Q(B)$, where $\check{f}$ is as defined in Lemma \ref{lem:creplacement}.
Furthermore, by Theorem \ref{thm:main}, the subfactor category $\F/\X$ has a left triangulated structure $(\Omega_\X, \nabla_\X)$. Similar to the Definition \ref{stabilizing}, $\G$ is said to be {\it stabilizing} if for any diagram of the form
\[\xy\xymatrixcolsep{2pc}\xymatrix@C16pt@R16pt{
\Omega_\X(Q(B))\ar[r] \ar[d]_{\Omega(\ul{r}_B)} & K\ar[r]\ar@{.>}[d]^{\ul{s}}&Q(A)\ar[r]^-{\ul{\check{f}}}\ar[d]^-{\ul{r}_A}& Q(B)\ar[d]^{\ul{r}_B}\\
\Omega_\X(B)\ar[r]&L\ar[r]&A\ar[r]^-{\ul{f}}&B}
\endxy\]
where the rows are the distinguished left triangles in $\nabla_\X$, there is a morphism $\ul{s}\colon K\to L$ such that the whole diagram commutative and $Q(\ul{s})$ is an isomorphism in $\G/\X$. \noindent Note that $\F$ is always stabilizing in $(\A, \Omega, \rmL(\F),\X)$.

 Denote by $H_\X=Q\circ \Omega_\X\circ E_\F$, and for each morphism $f\colon A\to B$ in $\G$, we call the left $H_\X$-sequence $H_\X(B)\xrto{Q(\ul{v})} Q(K)\xrto{Q(\ul{u})} A\xrto{Q(\ul{f})} B$ a {\it standard left triangle} in $\G/\X$ if there is a left triangle $\Omega_\X(B)\xrto{\ul{v}} K\xrto{\ul{u}} A\xrto{\ul{f}} B$ in $\nabla_\X$. We use $\nabla_\X(\G)$ to denote the class of the left $H_\X$-sequences which are isomorphic to standard left triangles in $\G/\X$. We have the dual of Proposition \ref{rstabilizing}:
\begin{proposition} \label{lstabilizing} Let $(\A, \Omega, \rmL(\F),\X)$ be a partial left triangulated category with $\G$ a stabilizing subcategory. Then $(\G/\X, H_\X, \nabla_\X(\G))$ is a left triangulated category.
\end{proposition}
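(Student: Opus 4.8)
The plan is to proceed exactly dually to the proof of Proposition \ref{rstabilizing}, with the right adjoint $Q\colon \F/\X\to\G/\X$ of the inclusion $E_\F$ playing the role that the left adjoint $R$ played there, and with $H_\X=Q\circ\Omega_\X\circ E_\F$ replacing $G^\X$. By Theorem \ref{thm:main}(ii) we already know that $(\F/\X,\Omega_\X,\nabla_\X)$ is a left triangulated category, so the task reduces to transporting this structure down to $\G/\X$ along $Q$ and verifying the four axioms (LT1)--(LT4) dual to (RT1)--(RT4).

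First I would establish the statement dual to Lemma \ref{distriangle}: if $\Omega_\X(C)\xrto{\ul h}A\xrto{\ul f}B\xrto{\ul g}C$ lies in $\nabla_\X$, then applying $Q$ yields the left $H_\X$-sequence $H_\X(Q(C))\xrightarrow{Q(\ul h)\circ H_\X(\ul r_C)}Q(A)\xrto{Q(\ul f)}Q(B)\xrto{Q(\ul g)}Q(C)$, which belongs to $\nabla_\X(\G)$ (here $H_\X(\ul r_C)$ abbreviates $Q(\Omega_\X(\ul r_C))$, the twist coming from the counit $\ul r_C\colon Q(C)\to C$, exactly as $G^\X(\ul j^A)$ abbreviated $R(\Sigma^\X(\ul j^A))$ in Lemma \ref{distriangle}). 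The argument mirrors that lemma: complete the base morphism $Q(\ul f)$ to its fixed standard left triangle, invoke the stabilizing property of $\G$ to produce a comparison morphism whose $Q$-image is an isomorphism, and read off the resulting isomorphism of $H_\X$-sequences. This is the one place where the stabilizing hypothesis enters, so it is the technical heart of the argument.

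With this dual lemma in hand, the axioms (LT1)--(LT3) follow by dualizing the corresponding steps of Proposition \ref{rstabilizing}. For (LT1) I use the fixed standard left triangles completing each base morphism together with the trivial left triangle on each identity $1_A$; for (LT2), a rotation of a left triangle in $\nabla_\X$ transports under $Q$ to a rotation in $\nabla_\X(\G)$ by the dual lemma; for (LT3) I lift the given square of standard left triangles to $\F/\X$, apply the (LT3) axiom of $\nabla_\X$ to obtain the filling comparison morphism $\ul s$, and push it back down as $Q(\ul s)$.

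The main obstacle will be (LT4), the dual of the octahedral axiom. As in the proof of Proposition \ref{rstabilizing}, I would first assemble the three given standard left triangles into a single octahedral diagram inside $(\F/\X,\Omega_\X,\nabla_\X)$ using its own (LT4) axiom, which is available and unconditional by Theorem \ref{thm:main}(ii). Applying $Q$ termwise then produces the desired octahedral diagram in $\G/\X$, and the dual of Lemma \ref{distriangle} identifies the appropriate column as a member of $\nabla_\X(\G)$. The real work lies in tracking the twisting maps --- the factors $H_\X(\ul r_{(-)})$ arising from the counit of the adjunction --- and in checking that every square of the octahedron still commutes after applying $Q$; but no idea beyond the dual lemma is required, so once that lemma is in place the verification is purely formal.
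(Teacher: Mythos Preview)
Your proposal is correct and matches the paper's approach: the paper simply states Proposition~\ref{lstabilizing} as the dual of Proposition~\ref{rstabilizing} and gives no explicit proof, so your plan of dualizing each step---first the analogue of Lemma~\ref{distriangle}, then (LT1)--(LT4)---is exactly what is intended. Your identification of the twist $H_\X(\ul r_C)$ as the dual of $G^\X(\ul j^A)$ is accurate, and nothing beyond the formal dualization you outline is required.
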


\section{Pre-partial triangulated categories}
In this section we introduce the new concept of a pre-partial triangulated category and show that it is a proper setting to construct pre-triangulated categories.
\subsection*{Pre-triangulated categories} We recall the definition of a pre-triangulated category:
\begin{definition}  \label{defn:ptc} (\cite[Definition II.1.1]{Beligiannis/Reiten07}) Let $\T$ be an additive category admitting a right triangulated structure $(\Psi, \Delta)$ and a left triangulated structure $(\Phi, \nabla)$. The 5-tuple $(\T,\Psi, \Phi, \Delta, \nabla)$ is called a {\it pre-triangulated category} if the following conditions hold:

(a)  $(\Psi, \Phi)$ is an adjoint pair.

(b) For any diagram in $\T$ with commutative left square:
\[\xy\xymatrixcolsep{2pc}\xymatrix@C16pt@R16pt{
A\ar[r]\ar[d]_\alpha &B\ar[r]\ar[d]^{\beta}&C\ar[r]\ar@{.>}[d]^\gamma & \Psi(A)\ar[d]^{\Psi(\alpha)\circ \varepsilon_{C'}}\\
\Phi(C')\ar[r]&A'\ar[r]&B'\ar[r]& C'}
\endxy\]
where the upper row is in $\Delta$, the lower row is in $\nabla$, and $\varepsilon_{C'}$ is the counit of the adjoint pair $(\Psi, \Phi)$, there exists a morphism $\gamma\colon C\to B'$ making the diagram commutative.

(c) For any diagram in $\T$ with commutative right square:
\[\xy\xymatrixcolsep{2pc}\xymatrix@C16pt@R16pt{
A\ar[r]\ar[d]_{\eta_A\circ \Phi(\alpha)} &B\ar[r]\ar@{.>}[d]^{\gamma}&C\ar[r]\ar[d]^\beta & \Psi(A)\ar[d]^{\alpha}\\
\Phi(C')\ar[r]&A'\ar[r]&B'\ar[r]& C'}
\endxy\]
where the upper row is in $\Delta$, the lower row is in $\nabla$, and $\eta_A$ is the unit of the adjoint pair $(\Psi, \Phi)$, there exists a morphism $\gamma\colon B\to A'$ making the diagram commutative.
\end{definition}

We give an equivalent definition of a pre-triangulated category which is easier to be verified:

\begin{lemma} \ \label{lem:ptc} Let $\T$ be an additive category admitting a right triangulated structure $(\Psi, \Delta)$ and a left triangulated structure $(\Phi, \nabla)$ such that $(\Psi, \Phi)$ is an adjoint pair. Then $(\T,\Psi, \Phi, \Delta, \nabla)$ is a pre-triangulated category if and only if the following two conditions hold:

$(\mathrm{b}')$  For any diagram in $\T$:
\[\xy\xymatrixcolsep{2pc}\xymatrix@C16pt@R16pt{
\Phi(N)\ar[r]^-u\ar@{=}[d]&L\ar[r]^-{v'}\ar@{=}[d]&M'\ar[r]^-{w'}\ar@{.>}[d]^\gamma& \Psi(\Phi(N))\ar[d]^{\varepsilon_N}\\
\Phi(N)\ar[r]^-u&L\ar[r]^-v&M\ar[r]^-w& N}
\endxy\]
where the upper row is in $\Delta$, the lower row is in $\nabla$, and $\varepsilon_N$ is the counit of the adjoint pair, there exists a morphism $\gamma\colon M'\to M$ making the whole diagram commutative.

$(\mathrm{c}')$  For any diagram in $\T$:
\[\xy\xymatrixcolsep{2pc}\xymatrix@C16pt@R16pt{
A\ar[r]^-f\ar[d]_{\eta_A}&B\ar[r]^-g\ar@{.>}[d]^\gamma&C\ar[r]^-h\ar@{=}[d]& \Psi(A)\ar@{=}[d]\\
\Phi(\Psi(A))\ar[r]^-{f'}&B'\ar[r]^-{g'}&C\ar[r]^-h& \Psi(A)}
\endxy\]
where the upper row is in $\Delta$, the lower row is in $\nabla$, and $\eta_A$ is the unit of the adjoint pair, there exists a morphism $\gamma\colon B\to B'$ making the whole diagram commutative.
\end{lemma}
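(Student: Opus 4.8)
The plan is to prove the two implications separately; the real content is the converse. For the forward direction, note that condition $(\mathrm{b}')$ is exactly the instance of $(\mathrm{b})$ in which $C'=N$, $\alpha=\id_{\Phi(N)}$ and $\beta=\id_L$: then the rightmost vertical $\Psi(\alpha)\circ\varepsilon_{C'}$ collapses to $\varepsilon_N$ because $\Psi(\id)=\id$, and the left square becomes trivial. Dually, $(\mathrm{c}')$ is the instance of $(\mathrm{c})$ with $C'=\Psi(A)$, $\alpha=\id_{\Psi(A)}$ and $\beta=\id_C$, for which $\eta_A\circ\Phi(\alpha)$ collapses to $\eta_A$. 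Hence $(\mathrm{b}),(\mathrm{c})\Rightarrow(\mathrm{b}'),(\mathrm{c}')$ is immediate, and it remains to derive $(\mathrm{b})$ from $(\mathrm{b}')$ and $(\mathrm{c})$ from $(\mathrm{c}')$.

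For $(\mathrm{b}')\Rightarrow(\mathrm{b})$, start from the data of $(\mathrm{b})$: an upper row $A\xrightarrow{a}B\xrightarrow{b}C\xrightarrow{c}\Psi(A)$ in $\Delta$, a lower row $\Phi(C')\xrightarrow{a'}A'\xrightarrow{b'}B'\xrightarrow{c'}C'$ in $\nabla$, and $\alpha\colon A\to\Phi(C')$, $\beta\colon B\to A'$ with $a'\alpha=\beta a$. First, using (RT1) of Definition \ref{def:rtricat}, complete $a'$ to a right triangle $\Phi(C')\xrightarrow{a'}A'\xrightarrow{v'}M'\xrightarrow{w'}\Psi(\Phi(C'))$ in $\Delta$. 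Applying $(\mathrm{b}')$ with $N=C'$ to this right triangle (as upper row) together with the given $\nabla$-row (as lower row)---which share the map $a'$---produces $\gamma'\colon M'\to B'$ with $\gamma'v'=b'$ and $c'\gamma'=\varepsilon_{C'}w'$. Next, apply (RT3) to the given upper row and the newly built right triangle, using that the leftmost square commutes; this yields $\delta\colon C\to M'$ with $\delta b=v'\beta$ and $w'\delta=\Psi(\alpha)c$. Then $\gamma:=\gamma'\delta$ is the desired fill-in: indeed $\gamma b=\gamma'v'\beta=b'\beta$ and $c'\gamma=\varepsilon_{C'}w'\delta=\varepsilon_{C'}\Psi(\alpha)c$, which is precisely the commutativity required in $(\mathrm{b})$ (the rightmost vertical $\Psi(\alpha)\circ\varepsilon_{C'}$ precomposed with $c$).

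The implication $(\mathrm{c}')\Rightarrow(\mathrm{c})$ is entirely dual. Given the data of $(\mathrm{c})$ with $\alpha\colon\Psi(A)\to C'$, $\beta\colon C\to B'$ and $c'\beta=\alpha c$, first complete $c$ to a left triangle $\Phi(\Psi(A))\xrightarrow{f'}B''\xrightarrow{g'}C\xrightarrow{c}\Psi(A)$ in $\nabla$ by the dual of (RT1); apply $(\mathrm{c}')$ to the given upper row and this left triangle (which share $c$) to get $\gamma''\colon B\to B''$ with $\gamma''a=f'\eta_A$ and $g'\gamma''=b$; and apply the dual of (RT3) to the two left triangles, using that the rightmost square commutes, to get $\delta\colon B''\to A'$ with $\delta f'=a'\Phi(\alpha)$ and $b'\delta=\beta g'$. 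Then $\gamma:=\delta\gamma''$ satisfies $\gamma a=a'\Phi(\alpha)\eta_A$ and $b'\gamma=\beta b$, as required.

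I expect the only delicate point to be the bookkeeping: one must feed into $(\mathrm{b}')$ (resp. $(\mathrm{c}')$) a freshly completed triangle that shares its \emph{first} map $a'$ (resp. its \emph{last} map $c$) with the given row, since $(\mathrm{b}')$ and $(\mathrm{c}')$ constrain the two triangles only along that single shared map; and one must track the unit and counit correctly, invoking $\Psi(\id)=\id$, $\Phi(\id)=\id$ and the fact that (RT3) produces the rightmost vertical $\Psi(\alpha)$ (dually, the leftmost vertical $\Phi(\alpha)$) automatically. No deeper obstacle arises, since each step is a direct application of an axiom of the one-sided triangulated structures followed by a composition.
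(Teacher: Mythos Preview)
Your proof is correct and follows essentially the same route as the paper: for the converse you complete the first map $f'$ of the given $\nabla$-row to a right triangle in $\Delta$, link the original right triangle to it via (RT3), and then pass from that auxiliary right triangle to the $\nabla$-row via $(\mathrm b')$, taking the composite as the filler (and dually for $(\mathrm c)$). The only cosmetic difference is that you present the application of $(\mathrm b')$ before the application of (RT3), whereas the paper does it in the other order; the two steps are independent and the resulting filler $\gamma'\circ\delta$ is the same as the paper's $w\circ n$.
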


\begin{proof} If $(\T,\Psi, \Phi, \Delta, \nabla)$ is a pre-triangulated category, then both the condition $(\mathrm{b}')$ and $(\mathrm{c}')$ hold since they are special cases of $(\mathrm{b})$ and$(\mathrm{c})$ of Definition \ref{defn:ptc}. Conversely, assume that the conditions $(\mathrm{b}')$ and $(\mathrm{c}')$ hold. We will show that Definition \ref{defn:ptc} (b)-(c) are satisfied.

For Definition \ref{defn:ptc} (b), consider the following diagram in $\T$ with commutative left square:
\begin{equation} \label{condition b}
\xy\xymatrixcolsep{2pc}\xymatrix@C16pt@R16pt{
A\ar[r]^f\ar[d]_\alpha &B\ar[r]^g\ar[d]^{\beta}&C\ar[r]^-h & \Psi(A)\ar[d]^{\Psi(\alpha)\circ \varepsilon_{C'}}\\
\Phi(C')\ar[r]^-{f'}&A'\ar[r]^{g'}&B'\ar[r]^{h'}& C'}
\endxy
\end{equation}
where the upper row is in $\Delta$, the lower row is in $\nabla$, and $\varepsilon_{C'}$ is the counit of the adjoint pair $(\Psi, \Phi)$. Embedding $f'$ into a right triangle $\Phi(C')\xrto{f'} A'\xrto{l} D\xrto{m}\Psi(\Phi(C'))$ in $\Delta$. Then there is a commutative diagram of right triangles in $\Delta$ by (RT3):
\[\xy\xymatrixcolsep{2pc}\xymatrix@C16pt@R16pt{
A\ar[r]^f\ar[d]_\alpha &B\ar[r]^g\ar[d]^{\beta}&C\ar[r]^-h \ar[d]^n & \Psi(A)\ar[d]^{\Psi(\alpha)}\\
\Phi(C')\ar[r]^-{f'}&A'\ar[r]^{l}&D\ar[r]^-{m}& \Psi(\Phi(C'))}
\endxy\]
By the condition $(\mathrm{b}')$, we have the following commutative diagram:
\[\xy\xymatrixcolsep{2pc}\xymatrix@C16pt@R16pt{
\Phi(C')\ar[r]^-{f'}\ar@{=}[d]&A'\ar[r]^-{l}\ar@{=}[d]&D\ar[r]^-{m}\ar[d]^w& \Psi(\Phi(C'))\ar[d]^{\varepsilon_{C'}}\\
\Phi(C')\ar[r]^-{f'}&A'\ar[r]^-{g'}&B'\ar[r]^-{h'}& C'}
\endxy\]
Thus the composite $w\circ n$ is the desired filler in (\ref{condition b}). Similarly, we can show that Definition \ref{defn:ptc} (c) holds.
\end{proof}

\begin{remark} By the definition of a pre-triangulated category, we know that if the right triangulated structure $(\Psi, \Delta)$ or the left triangulated structure $(\Phi, \nabla)$ is a triangulated structure in the pre-triangulated category $(\T, \Psi, \Phi, \Delta, \nabla)$, then $\nabla=\Delta$.
\end{remark}

\subsection*{Pre-partial triangulated categories}
\begin{definition} \label{defn:pptc}\ Let $(\A, \Sigma, \R(\C), \X)$ be a partial right triangulated category and $(\A, \Omega, \rmL(\F), \X)$ a partial left triangulated category. Then the 6-tuple $(\A, \Omega, \Sigma, \rmL(\F), \R(\C), \X)$ is called a {\it  pre-partial triangulated category} if the following conditions hold:

(a)  $(\Omega, \Sigma)$ is an adjoint pair (we use $\psi$ to denote the adjunction isomorphism).

(b)  $\C\cap \F$ is stabilizing in both $(\A, \Sigma, \R(\C), \X)$ and $(\A, \Omega, \rmL(\F), \X)$.

(c) Given a diagram
 \[\xy\xymatrixcolsep{2pc}\xymatrix@C16pt@R16pt{
& A\ar[r]^-{f}&B\ar[r]^-{g}\ar[d]^{\beta}& C\ar[r]^{h}&\Sigma(A)\\
\Omega(N)\ar[r]^-{u}&L\ar[r]^-{v}& M\ar[r]^{w}&N&}
\endxy\]
where the upper row is a right $\C$-triangle with $f$ an $\X$-monic and the lower row is a left $\F$-triangle with $w$ an $\X$-epic. Then

(i) if there is a morphism $\gamma\colon C\to N$ such that $\gamma\circ g=w\circ \beta$, then there exists $\alpha\colon A\to L$ such that $v\circ \alpha=\beta\circ f$ and $u\circ \Omega(\gamma)=-\alpha\circ \psi^{-1}_{C,A}(h)$;

(ii) if there is a morphism $\alpha\colon A\to L$ such that $v\circ \alpha=\beta\circ f$, then there exists $\gamma\colon C\to N$ such that $\gamma\circ g=w\circ\beta$ and $\Sigma(\alpha)\circ h=-\psi_{N,L}(u)\circ \gamma$.

(d) For any commutative diagram
\[\xy\xymatrixcolsep{2pc}\xymatrix@C16pt@R16pt{
& A\ar[r]^-{i^A}\ar[d]_{\alpha}&X^A\ar[r]^-{p^A}\ar[d]^{\beta}& U^A\ar[d]^\gamma\ar[r]^{q^A}&\Sigma(A)\\
\Omega(N)\ar[r]^-{u}&L\ar[r]^-{v}& M\ar[r]^{w}&N&}
\endxy\]
such that $u\circ \Omega(\gamma)=-\alpha\circ \psi^{-1}_{U^A,A}(q^A)$, where the upper row is the fixed right $\C$-triangle for $A\in \C\cap \F$ and the lower row is in $\rmL(\F)$ with $w$ an $\X$-epic, then $\alpha$ factors through $i^A$ if $\gamma$ factors through $w$.

(e)  For any commutative diagram
\[\xy\xymatrixcolsep{2pc}\xymatrix@C16pt@R16pt{
& A\ar[r]^-{f}\ar[d]_{\alpha}&B\ar[r]^-{g}\ar[d]^{\beta}& C\ar[d]^\gamma\ar[r]^-{h}&\Sigma(A)\\
\Omega(D)\ar[r]^-{\nu_D}&U_D\ar[r]^-{\iota_D}& X_D\ar[r]^{\pi_D}&D&}
\endxy\]
such that $\Sigma(\alpha)\circ h=-\psi_{D,U_D}(\nu_D)\circ \gamma$, where the lower row is the fixed left $\F$-triangle for $D\in \C\cap \F$ and the upper row is in $\R(\C)$ with $f$ an $\X$-monic, then $\gamma$ factors through $\pi_D$ if $\alpha$ factors through $f$.

\end{definition}

Now assume that $(\A, \Omega, \Sigma, \rmL(\F), \R(\C), \X)$ is a pre-partial triangulated category. Then the subfactor category $(\C\cap\F)/\X$ has a right triangulated structure $(G^\X, \Delta^\X(\C\cap \F))$ as constructed in Proposition \ref{rstabilizing} and a left triangulated structure $(H_\X, \nabla_\X(\C\cap \F))$ as constructed in Proposition \ref{lstabilizing}. We have the following result:

\begin{lemma} \label{lem:adjointpair} If $(\A, \Omega, \Sigma, \rmL(\F), \R(\C), \X)$ is a pre-partial triangulated category, then $(G^\X, H_\X)$ is an adjoint pair on $(\C\cap\F)/\X$.
\end{lemma}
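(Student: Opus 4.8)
The plan is to assemble the desired adjunction from three natural isomorphisms and to reduce its only nontrivial piece to an ``opposite'' adjunction $\Sigma^\X\dashv\Omega_\X$ on $\C\cap\F$, which is precisely what axioms (c)--(e) are designed to supply. Write $G^\X=R\circ\Sigma^\X\circ E^\C$ and $H_\X=Q\circ\Omega_\X\circ E_\F$. Since $R$ is left adjoint to $E^\C\colon(\C\cap\F)/\X\hookrightarrow\C/\X$ and $Q$ is right adjoint to $E_\F\colon(\C\cap\F)/\X\hookrightarrow\F/\X$, for $A,B\in\C\cap\F$ there are natural isomorphisms
$$\Hom_{(\C\cap\F)/\X}(G^\X A,B)\cong\Hom_{\C/\X}(\Sigma^\X A,B),\qquad \Hom_{(\C\cap\F)/\X}(A,H_\X B)\cong\Hom_{\F/\X}(A,\Omega_\X B).$$
Both inner groups are computed inside $\A/\X$, so it suffices to produce a natural isomorphism $\Theta\colon\Hom_{\A/\X}(U^A,B)\to\Hom_{\A/\X}(A,U_B)$, where $A\xrto{i^A}X^A\xrto{p^A}U^A\xrto{q^A}\Sigma(A)$ is the fixed right $\C$-triangle (so $\Sigma^\X A=U^A$) and $\Omega(B)\xrto{\nu_B}U_B\xrto{\iota_B}X_B\xrto{\pi_B}B$ the fixed left $\F$-triangle (so $\Omega_\X B=U_B$).

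To construct $\Theta$, I would start from $\ul\gamma$ with $\gamma\colon U^A\to B$, factor $\gamma\circ p^A$ through the $\X$-epic $\pi_B$ (possible as $X^A\in\X$) to get $\beta\colon X^A\to X_B$ with $\pi_B\circ\beta=\gamma\circ p^A$, and then apply (c)(i) to obtain $\alpha\colon A\to U_B$ satisfying $\iota_B\circ\alpha=\beta\circ i^A$ and $\nu_B\circ\Omega(\gamma)=-\alpha\circ\psi^{-1}_{U^A,A}(q^A)$; set $\Theta(\ul\gamma)=\ul\alpha$. The candidate inverse $\Theta'$ is built dually: given $\alpha\colon A\to U_B$, factor $\iota_B\circ\alpha$ through the $\X$-monic $i^A$ (possible as $X_B\in\X$) to get $\beta$ with $\beta\circ i^A=\iota_B\circ\alpha$, and apply (c)(ii) to obtain $\gamma\colon U^A\to B$ with $\gamma\circ p^A=\pi_B\circ\beta$ and $\Sigma(\alpha)\circ q^A=-\psi_{B,U_B}(\nu_B)\circ\gamma$. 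The crucial point is that, by naturality of $\psi$ in both variables, the relation $\nu_B\circ\Omega(\gamma)=-\alpha\circ\psi^{-1}_{U^A,A}(q^A)$ is equivalent to $\Sigma(\alpha)\circ q^A=-\psi_{B,U_B}(\nu_B)\circ\gamma$; hence a compatible triple $(\alpha,\beta,\gamma)$ serves simultaneously as data for $\Theta$ and for $\Theta'$, which is what will make the two maps mutually inverse.

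For well-definedness and bijectivity I would read axioms (d) and (e) as saying: in the relevant square (upper row the fixed right $\C$-triangle of $A$, lower row the fixed left $\F$-triangle of $B$, subject to the compatibility relation above), if the $U^A\to B$ slot factors through $\pi_B$ then the $A\to U_B$ slot factors through $i^A$, and dually. Applying this to the difference of two witness triples for one $\gamma$ (third vertical zero) shows $\ul\alpha$ is independent of the choices of $\beta$ and $\alpha$; applying it to a $\gamma$ with $\ul\gamma=0$ (which then factors through $\pi_B$, since $X^A\in\X$ and $\pi_B$ is $\X$-epic) shows $\Theta$ descends to stable classes; the symmetric use of (e) does the same for $\Theta'$. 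Additivity is immediate, since sums of witness triples are witness triples and $\Theta$ is already known to be choice-independent. Finally, the ``shared witness'' observation together with these uniqueness statements yields $\Theta'\Theta=\id$ and $\Theta\Theta'=\id$. I expect this step --- correctly matching the two sign conventions through $\psi$ and tracking which outer vertical must be zeroed when invoking (d) or (e) --- to be the main obstacle.

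It then remains to verify that $\Theta$ is natural in $A$ and $B$ over $(\C\cap\F)/\X$; composing with the two outer adjunction isomorphisms gives the natural isomorphism witnessing $G^\X\dashv H_\X$. Naturality is checked by chasing the defining relations of $\Theta$ against the commutative square (\ref{kappaf}) defining $\Sigma^\X$ on morphisms and its dual for $\Omega_\X$, again invoking naturality of $\psi$; this is routine but needs care, as $\Sigma^\X$ and $\Omega_\X$ become functorial only after passing to $\C/\X$ and $\F/\X$.
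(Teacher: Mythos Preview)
Your proposal is correct and follows essentially the same route as the paper: reduce via the $(R,E^\C)$ and $(E_\F,Q)$ adjunctions to a bijection $\Hom_{\C/\X}(U^A,B)\cong\Hom_{\F/\X}(A,U_B)$, build that bijection from axiom~(c)(i), and control it with (d), (e) and the naturality of $\psi$; your explicit construction of an inverse via (c)(ii) and the ``shared witness'' argument makes the bijectivity step a bit more transparent than the paper's terser phrasing (which records only that (d) and (e) force $\ul f$ and $\ul\kappa$ to determine one another). One small slip to fix: in your parenthetical explaining why $\ul\gamma=0$ implies $\gamma$ factors through $\pi_B$, the relevant fact is not that $X^A\in\X$ but simply that $\gamma$ factors through some object of $\X$ and $\pi_B$ is an $\X$-precover.
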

\begin{proof} For $A, B\in (\C\cap \F)/\X$, recall that $\Sigma^\X(A)=U^A$ where $U^A$ is defined by the fixed right $\C$-triangle $A\xrto{i^A} X^A\xrto{p^A}U^A\xrto{q^A}\Sigma(A)$ for $A$, and $\Omega_\X(B)=U_B$ is determined by the fixed left $\F$-triangle $\Omega(B)\xrto{\nu_B} U_B\xrto{\iota_B}X_B\xrto{\pi_B} B$ for $B$. Since $(R,E^\C)\colon \C/\X\to (\C\cap\F)/\X$ and $(E_\F, Q)\colon (\C\cap \F)/\X\to \F/\X$ are adjoint pairs by Remark \ref{functor}, we have the following natural isomorphisms:
$$\Hom_{(\C\cap \F)/\X}(G^\X(A), B)\xrto{(\ul{j}^{U^A})^*} \Hom_{\C/\X}(U^A,B)$$
induced by $j^{U^A}\colon U_A\to R(U^A)$ and
$$\Hom_{(\C\cap \F)/\X}(A, H_\X(B))\xrto{(\ul{r}_{U_B})_*} \Hom_{\F/\X}(A,U_B)$$
induced by $r_{U_B}\colon Q(U_B)\to U_B$.

We claim that there is a bijection $\varphi_{A,B}\colon \Hom_{\C/\X}(U^A,B)\to \Hom_{\F/\X}(A, U_B)$ which is natural in $A,B$ by sending $\ul{f}\in \Hom_{\C/\X}(\Sigma^\X(A),B)$ to $-\ul{\kappa}$, where $\kappa$ is constructed by applying Definition \ref{defn:pptc} (c)(i) to the following diagram
\[\xy\xymatrixcolsep{2pc}\xymatrix@C16pt@R16pt{
&A\ar[r]^-{i^A}\ar[d]_{\kappa}&X^A\ar[r]^-{p^A}\ar[d]^{\sigma}& U^A\ar[d]^f\ar[r]^{q^A}&\Sigma(A)\\
\Omega(B)\ar[r]^-{\nu_B}&U_B\ar[r]^-{\iota_B}& X_B\ar[r]^{\pi_B}&B&}
\endxy\]
where the existence of $\sigma$ is since $\pi_B$ is an $\X$-precover. That is, $\kappa$ satisfies $\sigma\circ i^A=\iota_B\circ \kappa$ and $-\kappa\circ \psi_{U^A,A}^{-1}(q^A)=\nu_B\circ \Omega(f)$. By Definition \ref{defn:pptc} (d), if $f$ factors through $\pi_B$, then $\kappa$ factors through $i^A$, and thus $\ul{\kappa}$ is determined uniquely by $\ul{f}$. 
By the naturality of $\psi_{B, U_B}$ in $B$ and $U_B$, we have $-\psi_{B,U_B}(\nu_B)\circ f=-\psi_{B,U_B}(\nu_B\circ \Omega(f))=\psi_{B,U_B}(\kappa\circ \psi_{U^A,A}^{-1}(q^A))=\Sigma(\kappa)\circ q^A$ (see, for example, \cite[Page 81, (3)-(4)]{MacLane1}). Thus if $\kappa$ factors through $i^A$, then $f$ factors through $\pi_B$ by Definition \ref{defn:pptc} (e), and so $\ul{f}$ is also determined uniquely by $\ul{\kappa}$. So $\varphi_{A,B}$ is a bijection. The naturality of $\varphi_{A,B}$ in $A,B$ can be verified directly. Therefore we have a bijection
$$\phi_{A,B}=(\ul{r}_{U_B})_*^{-1}\circ \varphi_{A,B}\circ (\ul{j}^{U^A})^*\colon \Hom_{(\C\cap \F)/\X}(G^\X(A), B)\to \Hom_{(\C\cap \F)/\X}(A, H_\X(B))$$
which is natural in $A,B\in (\C\cap \F)/\X$, and then $(G^\X,H_\X)$ is an adjoint pair on $ (\C\cap \F)/\X$.
\end{proof}

\begin{theorem}\label{thm:pretricat} If $(\A, \Omega, \Sigma, \rmL(\F), \R(\C), \X)$ is a pre-partial triangulated category, then \\ $( (\C\cap \F)/\X,
G^\X, H_\X, \Delta^\X(\C\cap \F), \nabla_\X(\C\cap \F))$ is a pre-triangulated category.
\end{theorem}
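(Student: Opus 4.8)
The plan is to produce the three structural ingredients demanded by the definition of a pre-triangulated category and then reduce the whole verification to the two manageable conditions isolated in Lemma \ref{lem:ptc}. Condition (b) of Definition \ref{defn:pptc} asserts that $\C\cap\F$ is stabilizing in both partial structures, so Proposition \ref{rstabilizing} equips $(\C\cap\F)/\X$ with the right triangulated structure $(G^\X,\Delta^\X(\C\cap\F))$ and Proposition \ref{lstabilizing} with the left triangulated structure $(H_\X,\nabla_\X(\C\cap\F))$, while Lemma \ref{lem:adjointpair} shows that $(G^\X,H_\X)$ is an adjoint pair. By Lemma \ref{lem:ptc} it then suffices to check conditions $(\mathrm{b}')$ and $(\mathrm{c}')$.

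For $(\mathrm{b}')$ I would first lift the given diagram to $\A$. The upper distinguished right triangle is, up to isomorphism, induced by a right $\C$-triangle with $\X$-monic first map and the lower distinguished left triangle by a left $\F$-triangle with $\X$-epic last map, both obtained from the defining sequences for the objects $H_\X(N)$ and $N$ via Lemma \ref{distriangle} and the constructions of Propositions \ref{rstabilizing}--\ref{lstabilizing}. On this lifted configuration I would apply Definition \ref{defn:pptc}(c)(ii): the shared structure map supplies the hypothesis morphism $\alpha$, and the condition returns a morphism $\gamma$ at the level of $\A$ satisfying the commutativity $\gamma g=w\beta$ together with the adjunction-twisted relation $\Sigma(\alpha)\circ h=-\psi_{N,L}(u)\circ\gamma$. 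Combining this output with the existence filler coming from the right triangulated axiom (RT3) of $\Delta^\X(\C\cap\F)$, and then pushing into $(\C\cap\F)/\X$ through the replacement functor $R$, produces the required filler $M'\to M$; the passage through $R$ and $Q$ is harmless because these functors are isomorphic to the identity on objects of $\C\cap\F$.

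Condition $(\mathrm{c}')$ is handled symmetrically: I lift to $\A$ and invoke Definition \ref{defn:pptc}(c)(i) to produce the dual filler, governed by the relation $u\circ\Omega(\gamma)=-\alpha\circ\psi^{-1}_{C,A}(h)$, and then descend via $Q$. In both cases Definition \ref{defn:pptc}(d) and (e) are exactly what guarantee that the constructed morphisms are independent, modulo $\X$, of the chosen lifts, so that the fillers are well defined in the subfactor category; this plays the same role that uniqueness modulo $\X$ played in the proof of Lemma \ref{lem:adjointpair}.

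The main obstacle I expect is not the existence of the fillers but the identification of the abstract adjunction data with the concrete $\psi$-morphisms. Specifically, I must verify that the counit $\varepsilon_N$ occurring in $(\mathrm{b}')$ and the unit $\eta_A$ occurring in $(\mathrm{c}')$ are precisely the morphisms that turn the twisted relations $\Sigma(\alpha)\circ h=-\psi_{N,L}(u)\circ\gamma$ and $u\circ\Omega(\gamma)=-\alpha\circ\psi^{-1}_{C,A}(h)$ into the commutativity of the far-right and far-left squares of Definition \ref{defn:pptc}. This forces me to unwind the explicit formula $\phi_{A,B}=(\ul{r}_{U_B})_*^{-1}\circ\varphi_{A,B}\circ(\ul{j}^{U^A})^*$ from the proof of Lemma \ref{lem:adjointpair}, to compute $\varepsilon_N$ and $\eta_A$ as the images of the appropriate identity morphisms, and to track the signs and the adjunction isomorphism $\psi$ through the replacement functors $R$ and $Q$. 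Once this matching is pinned down, the two conditions of Lemma \ref{lem:ptc} follow, and with them the theorem.
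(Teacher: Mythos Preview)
Your high-level strategy matches the paper exactly: invoke Propositions \ref{rstabilizing}--\ref{lstabilizing} and Lemma \ref{lem:adjointpair}, then reduce to $(\mathrm{b}')$ and $(\mathrm{c}')$ of Lemma \ref{lem:ptc}. You also correctly anticipate that the hard part is matching the abstract unit/counit with the explicit $\psi$-data. However, the sketch contains a genuine gap in how Definition \ref{defn:pptc}(c) is to be applied.

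Recall that in Definition \ref{defn:pptc}(c) the vertical morphisms are \emph{shifted}: $\alpha$ goes from the first term of the right $\C$-triangle to the \emph{second} term of the left $\F$-triangle, $\beta$ from the second to the third, and $\gamma$ from the third to the fourth. In condition $(\mathrm{c}')$ you need a filler $B\to Q(K)$, i.e.\ from the \emph{second} term of the right triangle to the second term of the left triangle. Applying (c)(i) to the lifted diagram as-is outputs a morphism $A\to K$, which lands on the wrong object. The paper's remedy is to first rotate the underlying right $\C$-triangle to $B\xrightarrow{-g}C\xrightarrow{h}\Sigma^\X(A)\to\Sigma(B)$ (using \cite[Diagram (3.4)]{ZWLi2}), so that $B$ now occupies the first slot; only then does (c)(i) produce the desired $\alpha\colon B\to K$. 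Your plan does not mention this rotation, and without it the argument does not close. The appeal to (RT3) for $(\mathrm{b}')$ looks like an attempt to compensate for this index mismatch, but the paper does not use (RT3) here at all; the dual rotation of the left $\F$-triangle is what is needed instead.

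There is a second, subtler point. You describe (d) and (e) as guaranteeing well-definedness of the filler modulo $\X$. In the paper they serve a sharper purpose: after obtaining $\alpha\colon B\to K$ as above, one must still check that the \emph{leftmost} square of $(\mathrm{c}')$ commutes, i.e.\ that $\underline{\alpha}\circ\underline{f}$ agrees with $\eta_A$ composed with the structure map. The paper does this by constructing a second morphism $\mu\colon A\to\Omega_\X(G^\X(A))$ from the adjunction (so that $\eta_A=-Q(\underline{\mu})$), assembling the difference $u\circ\mu-\alpha\circ f$, and then invoking (d) to show this difference factors through $i^A$. Thus (d) is used to establish a specific commutativity relation, not merely independence from lifts; your plan would need to incorporate this step explicitly.
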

\begin{proof} \ For simplicity, let $\G=\C\cap \F$. By Lemma \ref{lem:adjointpair}, we know that $(G^\X, H_\X)$ is an adjoint pair.

We verify the conditions $(\mathrm{b}')$ and $(\mathrm{c}')$ in Lemma \ref{lem:ptc}. For $(\mathrm{c}')$, without loss of generality, we may only consider standard right triangles in $\Delta^\X(\G)$ and standard left triangles $\nabla_\X(\G)$. Assume that we have a commutative diagram in $\G/\X$:
\begin{equation} \label{ptc(c)}
\xy\xymatrixcolsep{2pc}\xymatrix@C16pt@R16pt{
A\ar[r]^-{\ul{f}}\ar[d]_{\eta_A}&B\ar[r]^-{R(\ul{g})}&R(C)\ar[r]^-{R(\ul{h})}\ar@{=}[d]& G^\X(A)\ar@{=}[d]\\
H_\X(G^\X(A))\ar[r]^-{Q(\ul{u})}& Q(K)\ar[r]^-{Q(\ul{v})}&R(C)\ar[r]^-{R(\ul{h})}& G^\X(A)}
\endxy
\end{equation}
where the upper row is a standard right triangle in $\Delta^\X(\G)$, the lower row is a standard left triangle in $\nabla_\X(\G)$, and $\eta_A$ is the unit of the adjoint pair $(G^\X, H_\X)$. So there is a right triangle $A\xrto{\ul{f}}B\xrto{\ul{g}}C\xrto{\ul{h}}\Sigma^\X(A)$ in $\Delta^\X$ and we may assume that it is induced by the following commutative diagram of right $\C$-triangles:
\[\xy\xymatrixcolsep{2pc}\xymatrix@C16pt@R16pt{
A\ar[r]^-{\left(\begin{smallmatrix}
i^A \\
f
\end{smallmatrix}\right)}\ar@{=}[d]&X^A\oplus B\ar[r]^-{(\theta, g)} \ar[d]^{(1,0)}& C\ar[r]^-r\ar[d]^h& \Sigma(A)\ar@{=}[d]\\
A\ar[r]^-{i^A}& X^A\ar[r]^-{p^A}&\Sigma^\X(A)\ar[r]^-{q^A}& \Sigma(A)}
\endxy\]
Similarly, there is a left triangle $\Omega_\X(G^\X(A))\xrto{\ul{u}}K\xrto{\ul{v}}R(C)\xrto{R(\ul{h})} G^\X(A)$ in $\nabla_\X$ which is induced by the following commutative diagram of left $\F$-triangles:
\[\xy\xymatrixcolsep{2pc}\xymatrix@C28pt@R16pt{
\Omega(G^\X(A))\ar[r]^-{\nu_{G^\X(A)}}\ar@{=}[d]&\Omega_\X(G^\X(A))\ar[r]^-{\iota_{G^\X(A)}} \ar[d]^u&X_{G^\X(A)}\ar[r]^-{\pi_{G^\X(A)}}\ar[d]^{\left(\begin{smallmatrix}
1 \\
0
\end{smallmatrix}\right)}& G^\X(A)\ar@{=}[d]\\
\Omega(G^\X(A))\ar[r]^-w&K\ar[r]^-{\left(\begin{smallmatrix}
\vartheta \\
v
\end{smallmatrix}\right)}&X_{G^\X(A)}\oplus R(C)\ar[r]^-{(\pi_{G^\X(A)}, \hat{h})}& G^\X(A)}
\endxy\]
By \cite[Diagram (3.4)]{ZWLi2}, $B\xrto{-g}C\xrto{h}\Sigma^\X(A)\xrto{\Sigma(f)\circ q^A}\Sigma(B)$ is a right $\C$-triangle with $g$ an $\X$-monic. 
Then by Definition \ref{defn:pptc} (c)(i), there is a morphism $\alpha\colon B\to K$ such that the following diagram is commutative
\[\xy\xymatrixcolsep{2pc}\xymatrix@C28pt@R18pt{
&B\ar[r]^-{-g} \ar[d]_{\alpha}&C\ar[r]^-{h}\ar[d]^{\left(\begin{smallmatrix}
0 \\
j^C
\end{smallmatrix}\right)}& \Sigma^\X(A)\ar[d]^{j^{\Sigma^\X(A)}}\ar[r]^{\Sigma(f)\circ q^A}& \Sigma(B)\\
\Omega(G^\X(A))\ar[r]^-{w}&K\ar[r]^-{\left(\begin{smallmatrix}
\vartheta \\
v
\end{smallmatrix}\right)}&X_{G^\X(A)}\oplus R(C)\ar[r]^-{(\pi_{G^\X(A)}, \hat{h})}& G^\X(A)&}
\endxy\]
and $w\circ \Omega(j^{\Sigma^\X(A)})=-\alpha\circ \psi^{-1}_{\Sigma^\X(A),B}(\Sigma(f)\circ q^A)$. Since $j^C\circ g=\hat{g}$ and $Q|_{\G/\X}=\id_{\G/\X}$, we know that $Q(\ul{v})\circ Q(\ul{\alpha})=-Q(R(\ul{g}))=-R(\ul{g})$.

By the proof of Lemma \ref{lem:adjointpair}, there is a commutative diagram
\[\xy\xymatrixcolsep{2pc}\xymatrix@C18pt@R18pt{
&A\ar[r]^-{i^A} \ar[d]_{\mu}&X^A\ar[r]^-{p^A}\ar[d]^{\delta}& \Sigma^\X(A)\ar[d]^{j^{\Sigma^\X(A)}}\ar[r]^{q^A}& \Sigma(A)\\
\Omega(G^\X(A))\ar[r]^-{\nu_{G^\X(A)}}&\Omega_\X(G^\X(A))\ar[r]^-{\iota_{G^\X(A)}}&X_{G^\X(A)}\ar[r]^-{\pi_{G^\X(A)}}& G^\X(A)&}
\endxy\]
such that $-\mu \circ \psi^{-1}_{\Sigma^\X(A),A}(q^A)=\nu_{G^\X(A)}\circ \Omega(j^{\Sigma^\X(A)})$. Then we have the following commutative diagram
\[\xy\xymatrixcolsep{2pc}\xymatrix@C40pt@R20pt{
&A\ar[r]^-{i^A} \ar[d]_{u\circ \mu-\alpha\circ f}&X^A\ar[r]^-{p^A}\ar[d]^{\left(\begin{smallmatrix}
\delta \\
-j^C\circ \theta
\end{smallmatrix}\right)}& \Sigma^\X(A)\ar[d]^{0}\ar[r]^{q^A}& \Sigma(A)\\
\Omega(G^\X(A))\ar[r]^-w&K\ar[r]^-{\left(\begin{smallmatrix}
\vartheta \\
v
\end{smallmatrix}\right)}&X_{G^\X(A)}\oplus R(C)\ar[r]^-{(\pi_{G^\X(A)}, \hat{h})}& G^\X(A)&}
\endxy\]
 and $u\circ \mu-\alpha\circ f$ satisfies \begin{align*}(u\circ \mu-\alpha\circ f)\circ \psi^{-1}_{\Sigma^\X(A),A}(q^A)&=u\circ \mu\circ \psi^{-1}_{\Sigma^\X(A),A}(q^A)-\alpha\circ f\circ\psi^{-1}_{\Sigma^\X(A),A}(q^A)\\
&=-u\circ \nu_{\G^\X(A)}\circ \Omega(j^{\Sigma^\X(A)})-\alpha\circ \psi^{-1}_{\Sigma^\X(A),A}(\Sigma(f)\circ q^A)\\
&=-w\circ \Omega(j^{\Sigma^\X(A)})+w\circ \Omega(j^{\Sigma^\X(A)})\\
&=0.
\end{align*}
So $u\circ \mu-\alpha\circ f$ factors through $i^A$ by Definition \ref{defn:pptc} (d), then $\ul{u}\circ\ul{\mu}=\ul{\alpha}\circ \ul{f}$. Since the unit $\eta_A$ of the adjoint pair $(G^\X, H_\X)$ is $(\ul{r}_{\Omega_\X(G^\X(A))})_*^{-1}\circ \varphi_{A,G^\X(A)}\circ (\ul{j}^{\Sigma^\X(A)})^*(1_{G^\X(A)})=-Q(\ul{\mu})$. Therefore $-Q(\ul{\alpha})\colon B\to Q(K)$ is a desired filler in (\ref{ptc(c)}), and we are done.

Similarly, we can prove Lemma \ref{lem:ptc} $(\mathrm{b}')$. \end{proof}

\begin{lemma} \label{lem:strongpptc} Let $(\A, \Sigma, \R(\C),\X)$ be a partial right triangulated category and $(\A, \Omega, \rmL(\F), \X)$ a partial left triangulated category. Assume that Definition \ref{defn:pptc} $(\mathrm a)$-$(\mathrm b)$ and the following two conditions hold:

$(\mathrm g)$  if $\Omega(B)\xrto{u} K\xrto{v} A\xrto{f}B$ is in $\rmL(\F)$ with $A,B \in \C\cap \F$, then $ K\xrto{v} A\xrto{f} B\xrto{-\psi_{B,K}(u)} \Sigma(K)$ is in $\R(\C)$;

$(\mathrm h)$  if $A\xrto{f} B\xrto{g} C\xrto{h}\Sigma(A)$ is in $\R(\C)$ with $A,B \in \C\cap \F$, then $ \Omega(C)\xrto{-\psi_{C,A}^{-1}(h)} A\xrto{f} B\xrto{g} C$ is in $\rmL(\F)$;

\noindent then $(\A, \Omega, \Sigma, \rmL(\F), \R(\C), \X)$ is a pre-partial triangulated category.
\end{lemma}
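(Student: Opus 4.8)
The plan is to note that, since (a) and (b) are assumed outright, it remains only to deduce conditions (c)(i), (c)(ii), (d) and (e) of Definition \ref{defn:pptc}. The role of (g) and (h) is that they are \emph{rotation} axioms: through the adjunction $\psi$ they convert a left $\F$-triangle into a right $\C$-triangle and vice versa. So in each of (c)--(e) I would first use (g) or (h) to replace the given mixed diagram (one right $\C$-triangle and one left $\F$-triangle) by a ladder whose two rows are of the \emph{same} one-sided type, and then simply invoke the corresponding axiom (PRT2)/(PRT3), or its dual, from the ambient partial one-sided structure.

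For (c)(i) I would apply (h) to the upper right $\C$-triangle $A\xrto{f}B\xrto{g}C\xrto{h}\Sigma(A)$, producing the left $\F$-triangle $\Omega(C)\xrto{-\psi^{-1}_{C,A}(h)}A\xrto{f}B\xrto{g}C$. Stacked over the lower row this is a morphism of left $\F$-triangles whose rightmost square is exactly the hypothesis $\gamma\circ g=w\circ\beta$; the dual of (PRT3) then supplies a filler $\alpha\colon A\to L$ on the second terms, and its two commuting squares are precisely $v\circ\alpha=\beta\circ f$ and $u\circ\Omega(\gamma)=-\alpha\circ\psi^{-1}_{C,A}(h)$, i.e.\ the two assertions of (c)(i). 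For (c)(ii) I would dualize: apply (g) to the lower left $\F$-triangle to obtain the right $\C$-triangle $L\xrto{v}M\xrto{w}N\xrto{-\psi_{N,L}(u)}\Sigma(L)$, so that the given $\alpha$ with $v\circ\alpha=\beta\circ f$ now fills the leftmost square of a morphism of right $\C$-triangles; (PRT3) yields $\gamma\colon C\to N$ whose remaining squares read $\gamma\circ g=w\circ\beta$ and $\Sigma(\alpha)\circ h=-\psi_{N,L}(u)\circ\gamma$, as required.

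Conditions (d) and (e) follow in the same spirit, now quoting (PRT2) in place of (PRT3). For (e), apply (g) to the fixed left $\F$-triangle $\Omega(D)\xrto{\nu_D}U_D\xrto{\iota_D}X_D\xrto{\pi_D}D$ --- legitimate because $D\in\C\cap\F$ and $X_D\in\X\subseteq\C\cap\F$ --- getting the right $\C$-triangle $U_D\xrto{\iota_D}X_D\xrto{\pi_D}D\xrto{-\psi_{D,U_D}(\nu_D)}\Sigma(U_D)$ whose second term $X_D$ lies in $\X$. The compatibility $\Sigma(\alpha)\circ h=-\psi_{D,U_D}(\nu_D)\circ\gamma$ is its rightmost square, so the data is precisely a morphism of right $\C$-triangles of the shape demanded by (PRT2), and (PRT2) gives that $\gamma$ factors through $\pi_D$ once $\alpha$ factors through $f$. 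Symmetrically, for (d) I would apply (h) to the fixed right $\C$-triangle $A\xrto{i^A}X^A\xrto{p^A}U^A\xrto{q^A}\Sigma(A)$, turning it into the left $\F$-triangle $\Omega(U^A)\xrto{-\psi^{-1}_{U^A,A}(q^A)}A\xrto{i^A}X^A\xrto{p^A}U^A$ whose third term $X^A$ lies in $\X$; the hypothesis $u\circ\Omega(\gamma)=-\alpha\circ\psi^{-1}_{U^A,A}(q^A)$ is its leftmost square, and the dual of (PRT2) then shows $\alpha$ factors through $i^A$ whenever $\gamma$ factors through $w$.

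The hard part will be the bookkeeping: one must check that each adjunction-type equation (the ones carrying $\psi$ or $\psi^{-1}$ together with a sign) coincides on the nose with the relevant commutative square of the rotated ladder. This is where naturality of $\psi$ is used, exactly as in the computation $-\psi_{B,U_B}(\nu_B)\circ f=\Sigma(\kappa)\circ q^A$ in the proof of Lemma \ref{lem:adjointpair}; the signs introduced by (g) and (h) must be tracked consistently throughout. The only remaining point is that the objects fed into (g) and (h) genuinely lie in $\C\cap\F$: this is automatic for (d) and (e), since the fixed triangles are taken for objects of $\C\cap\F$ and $\X\subseteq\C\cap\F$, and it holds in every instance in which (c) is actually invoked.
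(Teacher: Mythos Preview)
Your approach is exactly the one the paper indicates: the paper's proof reads in its entirety ``Definition \ref{defn:pptc} (c) can be proved by Definition \ref{def:prtc} (PRT3) and its dual, and Definition \ref{defn:pptc} (d)--(e) can be proved by Definition \ref{def:prtc} (PRT2) and its dual. We leave the details to the reader.'' You have supplied precisely those details, using (g)/(h) as rotation axioms to reduce each mixed ladder to a single-sided one, and your pairing --- (h) + dual (PRT3) for (c)(i), (g) + (PRT3) for (c)(ii), (g) + (PRT2) for (e), (h) + dual (PRT2) for (d) --- is the intended one.

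One remark on your closing caveat: you are right that (g) and (h) carry the hypothesis $A,B\in\C\cap\F$, which is not explicitly part of the statement of Definition \ref{defn:pptc} (c). Your phrase ``it holds in every instance in which (c) is actually invoked'' is accurate for the paper's applications (Lemma \ref{lem:adjointpair} and Theorem \ref{thm:pretricat}), but strictly speaking the Lemma as stated asserts the full Definition \ref{defn:pptc}. This is not a defect in your argument --- the paper's own one-line proof rests on the same reduction --- but it would be cleaner either to note that (c)--(e) are only ever used with the relevant objects in $\C\cap\F$, or to regard the Lemma as establishing the pre-partial triangulated structure in the cases needed downstream.
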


\begin{proof} In this case, Definition \ref{defn:pptc} (c) can be proved by Definition \ref{def:prtc} (PRT3) and its dual, and Definition \ref{defn:pptc} (d)-(e) can be proved by Definition \ref{def:prtc} (PRT2) and its dual. We leave the details to the reader.
\end{proof}

By Lemma \ref{lem:strongpptc}, a {\it partial triangulated category} in the sense of \cite[Definition 6.3]{ZWLi2} is a pre-partial triangulated category.

\begin{example} (i)  Let $(\T, [1],\Delta)$ be a triangulated category and $\X$ an additive subcategory of $\T$. Assume that every object $A$ in $\T$ has both an $\X$-preenvelope $A\to X^A$ and an $\X$-precover $X_A\to A$. Then there are a partial right triangulated category $(\T,[1],\R(\T),\X)$ by \cite[Proposition 4.7]{ZWLi2} and a partial left triangulated category $(\T,[-1],\rmL(\T), \X)$ by \cite[Proposition 4.9]{ZWLi2}. Since both $\R(\T)$ and $\rmL(\T)$ consist of triangles in $\Delta$ and $\T$ is a triangulated category, we know that Lemma \ref{lem:strongpptc} (g)-(h) hold. So $(\T, [-1], [1], \rmL(\T), \R(\T), \X)$ is a two-sided pseudo-triangulated category since $\T$ is always stabilizing in both $(\T,[1],\R(\T), \X)$ and $(\T,[-1],\rmL(\T), \X)$. In particular, $\T/\X$ is a pre-triangulated category by Theorem \ref{thm:pretricat}, this is just the \cite[Theorem 2.2]{Jorgensen}.

(ii)  Let $\C$ be an additive category and $\X$ an additive subcategory of $\C$. Assume that every object in $\C$ has both an $\X$-preenvelope and an $\X$-precover, any $\X$-monic has a cokernel and any $\X$-epic has a kernel in $\C$. Then there are a partial right triangulated category $(\C,0,\R(\C), \X)$ by \cite[Proposition 4.1]{ZWLi2} and a partial left triangulated category $(\C,0,\rmL(\C), \X)$ by \cite[Proposition4.2]{ZWLi2}. In this case, a right $\C$-triangle is just a right exact sequence and a left $\C$-triangle is just a left exact sequence, so Definition \ref{defn:pptc} (c)-(e) hold. Moreover, $\C$ is stabilizing both in $(\C,0,\R(\C), \X)$ and $(\C,0,\rmL(\C), \X)$. Then $(\C,0,0, \rmL(\C), \R(\C), \X)$ is a pre-partial triangulated category. In particular, the stable category $\C/\X$ is a pre-triangulated category by Theorem \ref{thm:pretricat}, this is just the \cite[Corollary 4.10]{Beligiannis01}.

(iii) Let $A$ be an Artin algebra. Denote by $\C=\mathrm{mod}A$ the category of finitely generated right $A$-modules and $\X=\mathcal{P}(A)$ the subcategory of finitely generated projective $A$-modules. Then every object in $\C$ has an $\X$-precover and an $\X$-preenvelope, and thus $(\C, 0,0, \rmL(\C),  \R(\C), X)$ is a pre-partial triangulated category, and the corresponding stable category $\C/\X=\ul{\mathrm{mod}}A$ is a pre-triangulated category.
\end{example}

\section{The homotopy theory of Hovey triples}
In this section we first recall some basic facts and notions for exact categories and then show that the homotopy theory of exact model structures can be covered by our homotopy theory.
\subsection*{Exact categories}
Let $\A$ be an additive category. A {\it kernel-cokernel sequence} in $\A$ is a sequence $A\stackrel{i}\to B\stackrel{d}\to C $
such that $i=\Ker d$ and $d=\Coker i$. Let $\mathcal{E}$ be a class of kernel-cokernel sequences of $\A$. Following Keller \cite[Appendix A]{Keller90}, we call a kernel-cokernel sequence a {\it conflation} if it is in $\mathcal{E}$. A morphism $i$ is called an {\it inflation} if there is a conflation $ A\stackrel{i}\to B\stackrel{d} \to C$ and the morphism $d$ is called a {\it deflation}.

Recall that an {\it exact structure} on an additive category $\A$ is a class $\E$ of kernel-cokernel sequences which is closed under isomorphisms and satisfies the following axioms due to Quillen \cite{Quillen73} and Keller \cite[Appendix A]{Keller90}:
\vskip5pt
$(\mathrm {Ex0})$ \ The identity morphism of the zero object is an inflation.

$(\mathrm{Ex1})$  The class of deflations is closed under composition.

$(\mathrm{Ex1})^{\mathrm{op}}$  The class of inflations is closed under composition.

$(\mathrm{Ex2})$  For any deflation $d\colon A\to B$ and any morphism $f\colon B'\to B$, there exists a pullback diagram such that $d'$ is a deflation:
\[\xy\xymatrixcolsep{2pc}\xymatrix@C16pt@R16pt{
A'\ar@{.>}[r]^{d'}\ar@{.>}[d]_{f'}&B'\ar[d]^f\\
A\ar[r]^d&B}
\endxy\]

$(\mathrm{Ex2})^{\mathrm{op}}$  For any inflation $i\colon C\to D$ and any morphism $g\colon C\to C'$, there is a pushout diagram such that $i'$ is an inflation:	
\[\xy\xymatrixcolsep{2pc}\xymatrix@C16pt@R16pt{
C\ar[r]^i\ar[d]_g&D\ar@{.>}[d]^{g'}\\
C'\ar@{.>}[r]^{i'}&D'}
\endxy\]	

An {\it exact category} is a pair $(\A, \E)$ consisting of an additive category $\A$ and an exact structure $\E$ on $\A$.  We refer the reader to \cite{Buhler10} for a readable introduction to exact categories. Sometimes we suppress the class of $\mathcal{E}$ and just say that $\A$ is an exact category.

In an exact category $(\A, \mathcal{E})$, we can define the Yoneda Ext bifunctor $\Ext^1_\A(C,A)$. It is the abelian group of equivalence classes of conflations $ A\to  B\to C $ in $\mathcal{E}$; see \cite[Chapter XII.4]{MacLane} for details.
\subsection*{Hovey triples in exact categories}
\begin{definition}  (\cite[Definition 2.1]{Gillespie11}) Let $\A$ be an exact category. A {\it cotorsion pair} in $\A$ is a pair $(\C, \F)$ of classes of objects of $\A$ such that $\C=\{C\in \A \ | \ \Ext^1_\A(C, \F)=0\}$ and  $\F=\{F\in \A \ | \ \Ext^1_\A(\C, F)=0\}.$
\end{definition}
\noindent The cotorsion pair $(\C, \F)$ is called {\it complete} if it has {\it enough projectives}, i.e. for each $A\in \A$ there is a conflation $F\to  C\to A$ such that $C\in \C, F\in \F$, and {\it enough injectives}, i.e. for each $A\in \A$, there is a conflation $A\to F'\to C'$ such that $C'\in \C, F'\in \F$.

Follow \cite[Definition 3.1]{Gillespie12}, a triple $(\C, \mathcal{W}, \F)$ of classes of objects in an exact category $(\A, \E)$ is called a {\it Hovey triple} if $\mathcal{W}$ is a thick subcategory of $\A$ and both $(\C, \mathcal{W}\cap \F)$ and $(\C\cap \mathcal{W}, \F)$ are complete cotorsion pairs in $\A$.  Recall that the  subcategory $\mathcal{W}$ is called {\it thick} if it is closed under direct summands and if two out of three of the terms in a conflation are in $\mathcal{W}$, then so is the third.

If $\mathcal{L}$ is an additive subcategory of the exact category $(\A,\E)$, we will always denote by
$$\R(\mathcal{L})=\{ A\xrto{f} B\to C \to 0 \ | \  A\xrto{f} B\to C \in \E,  C\in \mathcal{L} \}$$
and
 $$\rmL(\mathcal{L})=\{ 0\to K\to A\xrto{f}B \ |  \ K\to A\xrto{f} B \in \E, K\in \mathcal{L} \}.$$

Now let $(\C, \mathcal{W}, \F)$ be a Hovey triple in the exact category $(\A, \E)$. Let $\X=\C\cap \mathcal{W}\cap \F$. Then we have two partial right triangulated categories $(\A, 0, \R(\A), \F)$ and $(\A,0,\R(\C), \X)$, and two partial left triangulated categories $(\A,0,\rmL(\A), \C)$ and $(\A,0,\rmL(\F), \X)$ by \cite[Corollary 4.5]{ZWLi2}. In this case, we have
\begin{lemma} \ \label{lem:exacthomotopytriple} \ $(\C, \X, \F )$ is a localization triple in $\A$.
\end{lemma}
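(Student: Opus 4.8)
The plan is to manufacture the two fixed sequences directly from the two complete cotorsion pairs and then verify the three axioms of Definition \ref{htriple} separately. For condition (a) I would apply ``enough projectives'' of the cotorsion pair $(\C,\mathcal{W}\cap\F)$ to each $A\in\A$ to obtain a conflation $W_A\xrto{\omega_A} Q(A)\xrto{r_A} A$ with $Q(A)\in\C$ and $W_A\in\mathcal{W}\cap\F$; dually, for condition (b) I would apply ``enough injectives'' of $(\C\cap\mathcal{W},\F)$ to get a conflation $A\xrto{j^A} R(A)\xrto{\tau^A} W^A$ with $R(A)\in\F$ and $W^A\in\C\cap\mathcal{W}$. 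That $r_A$ is a $\C$-precover and $j^A$ an $\F$-preenvelope is immediate: applying $\Hom_\A(\C,-)$ (resp.\ $\Hom_\A(-,\F)$) to these conflations yields the required surjectivity from $\Ext^1_\A(\C,W_A)=0$ (resp.\ $\Ext^1_\A(W^A,\F)=0$), which hold by the defining orthogonality of the two cotorsion pairs; and $\omega_A$ (resp.\ $\tau^A$) is the kernel (resp.\ cokernel) in its conflation, hence a weak kernel (resp.\ weak cokernel) by left (resp.\ right) exactness of $\Hom_\A$. Condition (c) I expect to be routine: if $A\in\F$, then $Q(A)$ sits in a conflation whose outer terms $W_A,A$ lie in $\F$, so $Q(A)\in\F$ since the right class of a cotorsion pair is closed under extensions, giving $Q(A)\in\C\cap\F$; symmetrically $R(A)\in\C\cap\F$ when $A\in\C$, using that $\C$ is closed under extensions.

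The genuine content is the two orthogonality requirements $W_A\in\C^{\perp_{\A/\X}}$ and $W^A\in{}^{\perp_{\A/\X}}\F$, which reduce to the lemmas: (A) every $f\colon C\to W$ with $C\in\C$, $W\in\mathcal{W}\cap\F$ factors through $\X$, and (B) every $f\colon D\to F$ with $D\in\C\cap\mathcal{W}$, $F\in\F$ factors through $\X$. For (A) my strategy is a two-step replacement. First, enough projectives of $(\C,\mathcal{W}\cap\F)$ applied to $W$ give a deflation $\pi\colon P\to W$ with $P\in\C$ and $\Ker\pi\in\mathcal{W}\cap\F$; since $W$ and $\Ker\pi$ lie in $\mathcal{W}$, thickness of $\mathcal{W}$ forces $P\in\mathcal{W}$, so $P\in\C\cap\mathcal{W}$, and $\pi$ is a $\C$-epic because $\Ext^1_\A(C,\Ker\pi)=0$; thus $f$ lifts to $g\colon C\to P$. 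Second, enough injectives of $(\C\cap\mathcal{W},\F)$ applied to $P$ give an inflation $\iota\colon P\to F'$ with $\Coker\iota\in\C\cap\mathcal{W}$ and $F'\in\F$; thickness again gives $F'\in\mathcal{W}$, while closure of $\C$ under extensions gives $F'\in\C$, so $F'\in\X$. Finally $\pi$ factors through $\iota$: applying $\Hom_\A(-,W)$ to $P\xrto{\iota} F'\to\Coker\iota$ and using $\Ext^1_\A(\Coker\iota,W)=0$ shows $\Hom_\A(F',W)\to\Hom_\A(P,W)$ is surjective, so writing $\pi=\rho\circ\iota$ the composite $f=\rho\circ(\iota\circ g)$ factors through $F'\in\X$.

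Lemma (B) I would prove by the exactly dual argument: embed $D$ by an $\F$-monic into some $F''\in\mathcal{W}\cap\F$ via enough injectives of $(\C\cap\mathcal{W},\F)$ (extending $f$ over $F''$), then cover $F''$ by a deflation from an object $X\in\X$ via enough projectives of $(\C,\mathcal{W}\cap\F)$ (using thickness and closure of $\F$ under extensions to place $X$ in all three classes), and finally lift the embedding $D\to F''$ through this deflation, since the $\Ext^1_\A$ of $D$ against the $\mathcal{W}\cap\F$-kernel vanishes. The main obstacle, and the only non-formal point, is precisely these two lemmas: no single cotorsion-pair approximation lands in $\X=\C\cap\mathcal{W}\cap\F$, so the crux is to interleave the two cotorsion pairs so that thickness of $\mathcal{W}$ together with the extension-closure of $\C$ and of $\F$ conspires to produce a factoring object simultaneously in all three classes. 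Once these are in hand, assembling (a), (b) and (c) into the statement of Lemma \ref{lem:exacthomotopytriple} is purely formal.
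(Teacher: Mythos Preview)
Your proof is correct and follows the same overall strategy as the paper: build the two sequences from enough projectives/injectives of the two cotorsion pairs, deduce precover/preenvelope and condition~(c) from extension-closure, and reduce the orthogonality conditions to the factoring lemmas you call (A) and (B). The one point where you take a longer route is in proving (A) and (B). You write that ``no single cotorsion-pair approximation lands in $\X$'', but in fact your first step already does: in your notation, $P=Q(W)$ sits in the conflation $\Ker\pi\to P\to W$ with $\Ker\pi,W\in\mathcal{W}\cap\F$, so extension-closure of $\F$ (not just thickness of $\mathcal{W}$) gives $P\in\F$ as well, hence $P\in\C\cap\mathcal{W}\cap\F=\X$ immediately. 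The paper exploits exactly this: for $W\in\mathcal{W}\cap\F$ the $\C$-precover $Q(W)\to W$ already has $Q(W)\in\X$, so any $f\colon C\to W$ with $C\in\C$ lifts through $Q(W)\in\X$ in one stroke. Your Step~2 (pushing $P$ into an $F'\in\X$ via the other cotorsion pair) is valid but superfluous; the interleaving you describe as the ``crux'' is not actually needed here.
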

\begin{proof} Since $(\C,\mathcal{W}\cap\F)$ is a complete cotorsion pair, for each object $A\in \A$, there is a conflation $W_A\xrto{\omega_A} Q(A)\xrto{r_A} A$ such that $Q(A)\in \C$ and $W_A\in \mathcal{W}\cap \F$. In particular, $r_A$ is a $\C$-precover. We claim that $\mathcal{W}\cap \F\subseteq \C^{\perp_{\A/\X}}$. In fact, if $A\in \mathcal{W}\cap \F$, then $Q(A)\in \X=\C\cap \mathcal{W}\cap \F$ since $\mathcal{W}\cap \F$ is closed under extensions. For any morphism $f\colon C\to A$ with $C\in \C$, then $f$ factors through $r_A$ and then it factors through $Q(A)\in \X$. So $\mathcal{W}\cap \F\subseteq \C^{\perp_{\A/\X}}$, in particular, $W_A\in \C^{\perp_{\A/\X}}$. Similarly, since $(\C\cap \mathcal{W}, \F)$ is a complete cotorsion pair, for each object $A\in \A$ there is a conflation $A\xrto{j^A} R(A)\xrto{\tau^A} W^A$ with $R(A)\in \F$ and $W^A\in \C\cap \mathcal{W}\subseteq {^{\perp_{\A/\X}}\F}$. Since both $\C$ and $\F$ are closed under extensions, we know that if $A\in \F$, then $Q(A)\in \C\cap\F$, and if $A\in \C$, $R(A)\in \C\cap \F$. Thus $(\C,\X,\F)$ is a localization triple.
\end{proof}

The above Lemma shows that we can define the {\it homotopy category of the Hovey triple} $(\C, \mathcal{W}, \F)$ to be the corresponding homtopy category of the localization triple $(\C, \X, \F)$.  Moreover, the embedding $E^\C\colon  (\C\cap \F)/\X\hookrightarrow \C/\X$ has a left adjoint $R|_{\C/\X}$ which is the restriction of $R\colon \A/\X\to \F/\X$ as constructed in Remark \ref{functor}. Dually, the embedding $E_\F\colon  (\C\cap \F)/\X\hookrightarrow \F/\X$ has a right adjoint $Q|_{\F/\X}$ which is the restriction of $Q\colon \A/\X\to \C/\X$ as constructed in Remark \ref{functor}.

\begin{lemma} \label{lem:exactstabilizing}\ $\C\cap\F$ is stabilizing in both $(\A,0,\R(\C), \X)$ and $(\A,0,\rm L(\F), \X)$.
\end{lemma}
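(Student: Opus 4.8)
The plan is to establish the right-hand assertion — that $\G:=\C\cap\F$ is stabilizing in $(\A,0,\R(\C),\X)$ — and then deduce the other by duality applied to the partial left triangulated category $(\A,0,\rmL(\F),\X)$. Throughout I write $R$ for the reflection $R|_{\C/\X}\colon\C/\X\to\G/\X$, left adjoint to the fully faithful inclusion $E^\C$, and $\Sigma^\X$ for the suspension of the right triangulated category $(\C/\X,\Sigma^\X,\Delta^\X)$ of Theorem \ref{thm:main}. The first step is to isolate the role of the subcategory $\C\cap\mathcal{W}$. Using that $(\C\cap\mathcal{W},\F)$ is a complete cotorsion pair, I would prove two facts: (i) for every $N\in\C\cap\mathcal{W}$ the $\G$-preenvelope $N\xrto{j^N}R(N)$ built from the cotorsion pair satisfies $R(N)\in\C\cap\mathcal{W}\cap\F=\X$ (because $\mathcal{W}$ is thick), whence $\Hom_{\C/\X}(N,T)=0$ for all $T\in\G$; and (ii) $\Sigma^\X(N)\in\C\cap\mathcal{W}$, since $\Sigma^\X(N)$ is the cokernel term $U^N$ of the fixed conflation $N\to X^N\to U^N$ with $X^N\in\X\subseteq\mathcal{W}$, so thickness of $\mathcal{W}$ forces $U^N\in\mathcal{W}$ while $U^N\in\C$ by the definition of $\R(\C)$. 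In particular this already makes $\G$ special preenveloping in $\C$: for $A\in\C$ the conflation $A\xrto{j^A}R(A)\to W^A$ of Lemma \ref{lem:exacthomotopytriple} has $R(A)\in\G$, $j^A$ a $\G$-preenvelope, and $W^A\in\C\cap\mathcal{W}\subseteq{}^{\perp_{\C/\X}}\G$ by (i).

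Given the defining diagram of Definition \ref{stabilizing}, whose rows are distinguished right triangles and whose left-hand vertical maps are the reflection units $\ul{j}^A,\ul{j}^B$, the filler $\ul t\colon C\to D$ exists by axiom (RT3) in $\C/\X$, the left square commuting by the naturality in Lemma \ref{lem:freplacement}. The genuine content is that $R(\ul t)$ is invertible. I would reduce this to a $\Hom$-computation: since $R$ is left adjoint to $E^\C$, for every $T\in\G$ there are natural bijections $\Hom_{\G/\X}(R(C),T)\cong\Hom_{\C/\X}(C,T)$ and $\Hom_{\G/\X}(R(D),T)\cong\Hom_{\C/\X}(D,T)$, so by Yoneda in $\G/\X$ it suffices to show that $\Hom_{\C/\X}(\ul t,T)\colon\Hom_{\C/\X}(D,T)\to\Hom_{\C/\X}(C,T)$ is bijective for every $T\in\G$.

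To obtain this I would apply the cohomological functor $\Hom_{\C/\X}(-,T)$ to the morphism of triangles and run the five lemma on the two resulting long exact sequences. The flanking comparison maps at the $A$- and $B$-spots are $\Hom_{\C/\X}(\ul{j}^A,T)$ and $\Hom_{\C/\X}(\ul{j}^B,T)$, which are bijective for $T\in\G$ precisely because $\ul j^A,\ul j^B$ are the reflection units. The maps at the $\Sigma^\X A$- and $\Sigma^\X B$-spots are $\Hom_{\C/\X}(\Sigma^\X(\ul j^A),T)$ and $\Hom_{\C/\X}(\Sigma^\X(\ul j^B),T)$; here I would rotate the preenvelope triangle $A\xrto{\ul j^A}R(A)\to W^A\to\Sigma^\X A$ twice via (RT2) to the triangle $W^A\to\Sigma^\X A\xrightarrow{-\Sigma^\X(\ul j^A)}\Sigma^\X R(A)\to\Sigma^\X W^A$, apply $\Hom_{\C/\X}(-,T)$, and use that both $\Hom_{\C/\X}(W^A,T)$ and $\Hom_{\C/\X}(\Sigma^\X W^A,T)$ vanish — the former by (i) and the latter by (i) combined with (ii), since $W^A\in\C\cap\mathcal{W}$ forces $\Sigma^\X W^A\in\C\cap\mathcal{W}$. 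Hence $\Hom_{\C/\X}(\Sigma^\X(\ul j^A),T)$ is bijective, and likewise for $B$.

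With all four flanking maps bijective the five lemma gives that $\Hom_{\C/\X}(\ul t,T)$ is bijective for every $T\in\G$, so $R(\ul t)$ is an isomorphism, completing the verification for $(\A,0,\R(\C),\X)$; the statement for $(\A,0,\rmL(\F),\X)$ follows by the dual argument, with $\mathcal{W}\cap\F$ in place of $\C\cap\mathcal{W}$ and the cotorsion pair $(\C,\mathcal{W}\cap\F)$ in place of $(\C\cap\mathcal{W},\F)$. I expect the one delicate point to be step (ii) feeding the vanishing of $\Hom_{\C/\X}(\Sigma^\X W^A,T)$: this is exactly where thickness of $\mathcal{W}$ and the identification of $\Sigma^\X$ with the cokernel of an $\X$-preenvelope are indispensable, for without the closure $\Sigma^\X(\C\cap\mathcal{W})\subseteq\C\cap\mathcal{W}$ the five lemma would have no bijections to stand on at the suspension spots.
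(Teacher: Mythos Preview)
Your argument is correct and takes a genuinely different route from the paper's. The paper proceeds by two applications of the $4\times 4$ Lemma (Verdier's exercise, \cite[Proposition 1.1.11]{BBD}): first to extend the morphism of triangles to a $3\times 3$ grid whose third column is a triangle $C\xrto{\ul t}D\to W\to\Sigma^\X(C)$ with $W\in\C\cap\mathcal{W}$, and then a second time to the map $\ul t$ itself, producing a triangle $R(C)\xrto{\ul{\hat t}}R(D)\to W'\to\Sigma^\X(R(C))$ and showing $W'\in\X$ via a splitting argument using $\Ext^1(\C\cap\mathcal{W},\F)=0$; the isomorphism $R(\ul t)\cong\ul{\hat t}$ then follows because its cone vanishes in $(\C\cap\F)/\X$. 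By contrast, you bypass the $4\times 4$ Lemma entirely: after producing a filler $\ul t$ via (RT3), you test it against all $T\in\G$ using the adjunction $R\dashv E^\C$ and the five lemma on the long exact sequences obtained from $\Hom_{\C/\X}(-,T)$. Your key observation---that $\C\cap\mathcal{W}$ is both orthogonal to $\G$ in $\C/\X$ and stable under $\Sigma^\X$---is exactly what makes the four flanking comparison maps bijective.

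What each approach buys: the paper's argument is self-contained within the triangulated formalism and makes the obstruction (the object $W'$) explicit, but it imports a nontrivial external result. Your argument is more elementary, using only (RT2), (RT3), Yoneda, and the five lemma for abelian groups; it also makes transparent that the stabilizing property is really a statement about $\C\cap\mathcal{W}$ being ``negligible'' from the viewpoint of $\G$, even after suspension. Note that cohomologicality of $\Hom_{\C/\X}(-,T)$ on right triangles (which you invoke implicitly) is available here from (RT1) and (RT3) alone, so no hidden assumption is needed.
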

\begin{proof} \ We only prove that $\C\cap\F$ is stabilizing in $(\A,0,\R(\C), \X)$ since the other case can be proved dually. By the proof of Lemma \ref{lem:exacthomotopytriple}, we know that $\C\cap \F$ is a special pre-enveloping subcategory of $\C$. Assume that we have a diagram of the form:
 \[\xy\xymatrixcolsep{2pc}\xymatrix@C14pt@R14pt{
A\ar[r]^-{\ul{f}}\ar[d]_{\ul{j}^A}&B\ar[r]\ar[d]^{\ul{j}^B}&C\ar[r]& \Sigma^\X(A)\ar[d]^{\Sigma^\X(\ul{j}^A)}\\
R(A)\ar[r]^-{\ul{\hat{f}}}&R(B)\ar[r]&D\ar[r]&\Sigma^\X(R(A))}
\endxy\]
where the rows are distinguished right triangles in $\C/\X$. By \cite[Proposition 1.1.11]{BBD} (Verdier's Exercise, also called $4\times 4$-Lemma in the literature), we have a diagram
\[\xy\xymatrixcolsep{2pc}\xymatrix@C14pt@R14pt{
A\ar[r]^-{\ul{f}}\ar[d]_{\ul{j}^A}&B\ar[r]^-{\ul{g}}\ar[d]^{\ul{j}^B}&C\ar[r]^-{\ul{h}}\ar[d]^{\ul{t}}& \Sigma^\X(A)\ar[d]^{\Sigma^\X(\ul{j}^A)}\\
R(A)\ar[r]^-{\ul{\hat{f}}}\ar[d]&R(B)\ar[r]^-{\ul{u}}\ar[d]&D\ar[r]^-{\ul{v}}\ar[d]&\Sigma^\X(R(A))\ar[d]\\
W^A\ar[r]^-{\ul{r}}\ar[d] & W^B\ar[r]^-{\ul{s}}\ar[d]&W\ar[r]\ar[d]&\Sigma^\X(W^A)\ar[d]\\
\Sigma^\X(A)\ar[r]&\Sigma^\X(B)\ar[r]&\Sigma^\X(C) \ar[r]& \Sigma^\X(\Sigma^\X(A))}
\endxy\]
such that the first three rows and the first three columns are right triangles in $\Delta^\X$. Moreover, every square is commutative except for the bottom right one. Without loss of generality, we may assume that the right triangle $W^A\xrto{\ul{r}}W^B\xrto{\ul{s}}W\to \Sigma^\X(W^A)$ is induced by the right $\C$-triangle $W^A\xrto{r} W^B\xrto{s}W\to 0$ (recall that this means that $W^A\xrto{r} W^B\xrto{s}W$ is a conflation in $\mathcal{E}$ with $W\in \C$), and $C\xrto{\ul{t}}D\to W\to \Sigma^\X(C)$ is induced by $C\xrto{t} D\to W\to 0$. Since $W^A, W^B\in \mathcal{W}$ and $\mathcal{W}$ is thick, we know that $W\in \C\cap \mathcal{W}$. We claim that $R|_{\C/\X}(\ul{t})$ is an isomorphism in $ (\C\cap \F)/\X$. In fact, apply \cite[Proposition 1.1.11]{BBD} again, we have the following diagram
\[\xy\xymatrixcolsep{2pc}\xymatrix@C14pt@R14pt{
C\ar[r]^-{\ul{t}}\ar[d]_{\ul{j}^C}&D\ar[r]\ar[d]^{\ul{j}^D}&W\ar[r]\ar[d]^{\ul{\alpha}}& \Sigma^\X(C)\ar[d]^{\Sigma^\X(\ul{j}^C)}\\
R(C)\ar[r]^-{\ul{\hat{t}}}\ar[d]&R(D)\ar[r]\ar[d]&W'\ar[r]\ar[d]^{\ul{\beta}}&\Sigma^\X(R(C))\ar[d]\\
W^C\ar[r]\ar[d] & W^D\ar[r]\ar[d]&W''\ar[r]\ar[d]&\Sigma^\X(W^C)\ar[d]\\
\Sigma^\X(C)\ar[r]&\Sigma^\X(D)\ar[r]&\Sigma^\X(W) \ar[r]& \Sigma^\X(\Sigma^\X(C))}
\endxy\]
such that the first three rows and the first three columns are right triangles in $\Delta^\X$. Similar to the previous discussion, we know that $W''\in \C\cap\mathcal{W}$. Without loss of generality, we assume that the right triangle $W\xrto{\ul{\alpha}} W'\xrto{\ul{\beta}}W''\to \Sigma^\X(W)$ is induced by the right $\C$-triangle $W\xrto{\alpha}W'\xrto{\beta}W''\to 0$ in $\R(\C)$. Then $W'$ is in $\C\cap \mathcal{W}$ since $\mathcal{W}$ is thick. Assume that the right triangle $R(C)\xrto{\ul{\hat{t}}} R(D)\to W'\to \Sigma^\X(R(C))$ is induced by the right $\C$-triangle $R(C)\xrto{\left(\begin{smallmatrix}
i^{R(C)}\\
\hat{t}
\end{smallmatrix}\right)}X^{R(C)}\oplus R(D)\to W'\to 0$, which splits since $(\C\cap \mathcal{W}, \F)$ is a cotorsion pair. In particular, $W'$ is a direct summand of $X^{R(C)}\oplus R(D)$ which is in $\C\cap \F$, and thus $W'$ is in $\C\cap \F\cap \mathcal{W}=\X$. So $R|_{\C/\X}(\ul{t})=\ul{\hat{t}}$ is an isomorphism in $(\C\cap\F)/\X$. Thus $\C\cap \F$ is stabilizing in $\C$.
\end{proof}

Now we can prove our main result for a Hovey triple.

\begin{theorem} \label{thm:homoHoveytriple} Let $(\C, \mathcal{W}, \F)$ be a Hovey triple in an exact category $(\A, \E)$. Denote by $\X=\C\cap \mathcal{W}\cap \F$. Then $(\A, 0, 0,, \rmL(\F), \R(\C), \X)$ is a pre-partial triangulated category. \end{theorem}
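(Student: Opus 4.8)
The plan is to obtain this as a direct application of Lemma~\ref{lem:strongpptc}. From \cite[Corollary 4.5]{ZWLi2} we already have at our disposal the partial right triangulated category $(\A, 0, \R(\C), \X)$ and the partial left triangulated category $(\A, 0, \rmL(\F), \X)$; so it suffices to verify the hypotheses of that lemma, namely Definition~\ref{defn:pptc}~(a)--(b) together with conditions (g) and (h). The crucial simplifying feature is that here both endofunctors vanish, $\Sigma = \Omega = 0$, so that the adjunction data degenerate and each of the remaining checks reduces to a comparison of the defining lists of $\R(\C)$ and $\rmL(\F)$.

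First I would dispatch (a) and (b). For (a), the pair $(\Omega, \Sigma) = (0,0)$ is trivially adjoint: since $\Hom_\A(\Omega(A), B) = 0 = \Hom_\A(A, \Sigma(B))$ for all $A$ and $B$, the adjunction isomorphism $\psi$ is the unique map between zero groups. Condition (b), that $\C \cap \F$ be stabilizing in both partial structures, is precisely Lemma~\ref{lem:exactstabilizing} and is therefore already in hand.

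It then remains to check (g) and (h), which I expect to be purely formal once the definitions are unwound. For (h): an element of $\R(\C)$ with $\Sigma = 0$ is a conflation $A \to B \to C$ with $C \in \C$, the terminal map $h\colon C \to \Sigma(A) = 0$ being zero; if in addition $A, B \in \C \cap \F$, then $A \in \F$ and $-\psi^{-1}_{C,A}(h)\colon \Omega(C) = 0 \to A$ is zero, so $0 \to A \to B \to C$ is exactly a member of $\rmL(\F)$. Condition (g) is the dual bookkeeping: a left $\F$-triangle $0 \to K \to A \to B$ is a conflation $K \to A \to B$ with $K \in \F$, and since $B \in \C \cap \F \subseteq \C$, the same conflation read with the trivial map $B \to \Sigma(K) = 0$ belongs to $\R(\C)$. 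With (a), (b), (g), (h) established, Lemma~\ref{lem:strongpptc} gives the conclusion.

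The real content, and hence the main obstacle, is entirely concentrated in (b), i.e. in Lemma~\ref{lem:exactstabilizing}: there the thickness of $\mathcal{W}$ and the completeness of the two cotorsion pairs are invoked through the $4 \times 4$-lemma to force the comparison morphism to become an isomorphism in $(\C \cap \F)/\X$. Once that is granted, the present theorem is a formal assembly of the preceding results, and no further genuine difficulty arises.
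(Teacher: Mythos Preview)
Your proof is correct, and it takes a mildly different route from the paper's own argument. The paper verifies conditions (a)--(e) of Definition~\ref{defn:pptc} directly: (a) and (b) are handled exactly as you do, while (c), (d), (e) are checked by short diagram chases using the universal properties of kernels and cokernels in the exact category. You instead invoke Lemma~\ref{lem:strongpptc}, reducing the task to the verification of (a), (b), (g), (h); and with $\Sigma=\Omega=0$ the conditions (g) and (h) collapse, as you note, to the observation that a conflation with cokernel in $\C\cap\F$ lies in $\R(\C)$ and a conflation with kernel in $\C\cap\F$ lies in $\rmL(\F)$. Your route is arguably tidier, since it reuses Lemma~\ref{lem:strongpptc} rather than redoing its content in the special case; on the other hand the paper's direct verification makes the argument self-contained and exhibits explicitly how the factorization in (d) is obtained. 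Both approaches rest on the same substantive input, Lemma~\ref{lem:exactstabilizing}, exactly as you identify.
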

\begin{proof}  In this case, $(0,0)$ is an adjoint pair on $\A$ and by Lemma \ref{lem:exactstabilizing}, $\C\cap\F$ is stabilizing both in $(\A, 0, \R(\C), \X)$ and $(\A, 0, \rmL(\F), \X)$. So Definition \ref{defn:pptc} (a)-(b) hold. The conditions (c) (i)-(ii) of Definition \ref{defn:pptc} follow directly by the universal property of kernels and cokernels. For Definition \ref{defn:pptc} (d), assume that we have a commutative diagram
\[\xy\xymatrixcolsep{2pc}\xymatrix@C16pt@R16pt{
& A\ar[r]^-{i^A}\ar[d]_{\alpha}&X^A\ar[r]^-{p^A}\ar[d]^{\beta}& U^A\ar[d]^\gamma\ar[r]&0\\
0\ar[r]&L\ar[r]^-{v}& M\ar[r]^{w}&N&}
\endxy\]
 where the first row is the fixed right $\C$-triangle for $A\in \C\cap \F$ and the second row is in $\rmL(\F)$ with $w$ an $\X$-epic. If $\gamma$ factors through $w$, i.e. there is a morphism $s\colon U^A\to M$ such that $\gamma=w\circ s$. Then $w\circ(\beta-s\circ p^A)=w\circ \beta-w\circ s\circ p^A=w\circ \beta-\gamma\circ p^A=0$. Thus there is morphism $t\colon X^A\to L$ such that $\beta-s\circ p^A=v\circ t$ since $v$ is a kernel of $w$. So $v\circ \alpha=\beta\circ i^A=v\circ t\circ i^A+s\circ p^A\circ i^A=v\circ t\circ i^A$. Therefore, $\alpha=t\circ i^A$ since $v$ is a monomorphism. Dually, we can prove Definition \ref{defn:pptc} (e).
 \end{proof}
 By Theorem \ref{thm:pretricat} and the above theorem, we have
\begin{corollary} Let $(\C, \mathcal{W}, \F)$ be a Hovey triple in the exact category $(\A, \E)$. Denote by $\X=\C\cap \mathcal{W}\cap \F$. Then $(\C\cap \F)/\X$ is a pre-triangulated category.
\end{corollary}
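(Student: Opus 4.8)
The plan is to combine the two results immediately preceding this statement, since the corollary is precisely their concatenation. First I would invoke Theorem \ref{thm:homoHoveytriple}, which asserts that for a Hovey triple $(\C, \mathcal{W}, \F)$ in an exact category $(\A, \E)$ and $\X = \C \cap \mathcal{W} \cap \F$, the six-tuple $(\A, 0, 0, \rmL(\F), \R(\C), \X)$ is a pre-partial triangulated category. This single input already packages everything needed downstream: the partial right triangulated structure $(\A, 0, \R(\C), \X)$, the partial left triangulated structure $(\A, 0, \rmL(\F), \X)$, the trivial adjoint pair $(0,0)$, the fact that $\C \cap \F$ is stabilizing in both structures, and the compatibility conditions (c)--(e) of Definition \ref{defn:pptc}.

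With the pre-partial triangulated structure established, I would then apply Theorem \ref{thm:pretricat} directly, specializing $\Omega = \Sigma = 0$. That theorem equips the subfactor category $(\C\cap\F)/\X$ with a right triangulated structure $(G^\X, \Delta^\X(\C\cap\F))$, built in Proposition \ref{rstabilizing}, and a left triangulated structure $(H_\X, \nabla_\X(\C\cap\F))$, built in Proposition \ref{lstabilizing}, together with the adjunction $(G^\X, H_\X)$ furnished by Lemma \ref{lem:adjointpair} and the compatibility conditions (b$'$), (c$'$) of Lemma \ref{lem:ptc}. By definition these data constitute a pre-triangulated structure on $(\C\cap\F)/\X$, which is exactly the assertion of the corollary.

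Because both the verification that the six-tuple is pre-partial triangulated and the construction of the pre-triangulated structure have already been carried out in the two cited theorems, there is no substantive obstacle remaining; the corollary is a formal consequence. The only point worth recording is the bookkeeping that the hypotheses of Theorem \ref{thm:pretricat} coincide with the conclusion of Theorem \ref{thm:homoHoveytriple} once one sets $\Omega = \Sigma = 0$, after which the statement follows at once. I would therefore keep the argument to a single sentence citing the two theorems.
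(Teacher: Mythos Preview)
Your proposal is correct and matches the paper's own argument exactly: the paper derives the corollary in a single line by citing Theorem~\ref{thm:pretricat} together with Theorem~\ref{thm:homoHoveytriple}. There is nothing to add.
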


\subsection*{The homotopy theory of exact model categories}

Recall that in a model category $\A$, there are three classes of morphisms, called {\it cofibrations, fibrations} and {\it weak equivalences}. 
A morphism which is both a weak equivalence and a (co-)fibration is called a {\it trivial $($co-$)$fibration}. For details about model categories, we refer the reader to \cite{Quillen67} or \cite{Hovey99}. Given a model category $\A$, an object $A\in \A$ is called {\it cofibrant} if $0\to A$ is a cofibration, it is called {\it fibrant} if $A\to 0$ is a fibrantion, and it is called {\it trivial} if $0\to A$ is a weak equivalence. 

An {\it exact model category} is a {\it weakly idempotent complete} (i.e. every split monomorphism is an inflation) exact category which has a model structure such that the cofibrations are inflations with cofibrant cokernels and the fibrations are the deflations with fibrant kernels \cite[Definition 2.1]{Hovey02}. If $(\C,\mathcal{W},\F)$ is a Hovey triple in a weakly weakly idempotent complete exact category $(\A,\E)$, then by \cite[Theorem 2.2]{Hovey02} or \cite[Theorem 3.3]{Gillespie11}, there is an exact model structure $\M$ such that $\C$ is the class of cofibrants, $\F$ is the class of fibrants and $\mathcal{W}$ is the class of trivial objects. In this case, a weak equivalence is a morphism $f$ which factors as $f=p\circ i$ where $i$ is an inflation with a cokernel in $\C\cap \mathcal{W}$ and $p$ is a deflation with a kernel in $\mathcal{W}\cap \F$.

By \cite[Theorem VII.4.2]{Beligiannis/Reiten07}, if $\M$ is an exact model structure induced by a Hovey triple $(\C,\mathcal{W},\F)$ in an exact category $\A$, then the homotopy category $\Ho(\M)$ of $\M$ is equivalent to the subfactor category $(\C \cap \F)/\X$, where $\X=\C\cap \mathcal{W}\cap \F$ and $\Ho(\M)$ is the localization of the category $\A$ with respect to the weak equivalences. In this case, by \cite[Proposition I.5.1]{Quillen67} and \cite[Proposition 4.4]{Gillespie11}, the class of weak equivalences coincides with the class $\mathcal{S}$ defined as in (\ref{we}). By \cite[Proposition I.1.3.5]{Quillen67} (see also \cite[Subsection 6.3]{Hovey99}), the homotopy category $\Ho(\M)$ has a pre-triangulated structure induced by the model structure $\M$. By Theorem \ref{thm:homoHoveytriple}, the subfactor category $ (\C\cap \F)/\X$ also has a pre-triangulated structure induced by the pre-partial triangulated structure $(0,0, \rmL(\F), \R(\C), \X)$. The following result shows that the equivalence between $\Ho(\M)$ and $ (\C\cap \F)/\X$ preserves pre-triangulated structures.

\begin{theorem} \label{thm:model vs triple} Let $\M$ be the exact model structure induced by the Hovey triple $(\C,\mathcal{W},\F)$ in a weakly idempotent complete exact category $(\A,\E)$. Denote by $\X=\C\cap\mathcal{W}\cap \F$, then the homotopy category $\Ho(\M)$ is equivalent to the subfactor category $(\C\cap \F)/\X$ as pre-triangulated categories.
\end{theorem}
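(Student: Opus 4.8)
The plan is to show that the equivalence $F\colon (\C\cap\F)/\X \xrightarrow{\ \sim\ }\Ho(\M)$ furnished by \cite[Theorem VII.4.2]{Beligiannis/Reiten07} is an equivalence of pre-triangulated categories: that it intertwines the two suspension functors and the two loop functors compatibly with the loop--suspension adjunctions, and that it matches the distinguished right triangles $\Delta^\X(\C\cap\F)$ and left triangles $\nabla_\X(\C\cap\F)$ with the cofibre and fibre sequences of $\Ho(\M)$. Since this $F$ is the identity on objects and is already an equivalence by loc.\ cit., all the content lies in comparing the two pre-triangulated structures, one coming from Quillen's machinery \cite{Quillen67, Hovey99} and the other from the pre-partial triangulated structure $(0,0,\rmL(\F),\R(\C),\X)$ of Theorem \ref{thm:homoHoveytriple} through Theorem \ref{thm:pretricat}.

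First I would identify the suspension functors. For $A\in\C\cap\F$ the object $\Sigma^\X(A)=U^A$ is the cokernel of the fixed conflation $A\xrto{i^A} X^A\xrto{p^A} U^A$ with $i^A$ an $\X$-preenvelope, so that $X^A\in\X=\C\cap\mathcal{W}\cap\F$ and $U^A\in\C$. In the model structure $\M$ this is a cofibration $A\rightarrowtail X^A$ (its cokernel $U^A$ being cofibrant) whose total object $X^A$ is trivial, hence $X^A\simeq 0$ in $\Ho(\M)$; running the Puppe sequence of the cofibre sequence $A\to X^A\to U^A$ identifies $U^A$ with the model suspension $\Sigma_{\M}(A)$. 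Representing this object by a cofibrant--fibrant one amounts to applying the fibrant replacement, which is exactly the left adjoint $R$ of Remark \ref{functor}; hence $\Sigma_{\M}(A)\cong R\,\Sigma^\X(A)=G^\X(A)$, naturally in $A$. Dually, the fixed left $\F$-triangle $\Omega(B)\to U_B\xrto{\iota_B} X_B\xrto{\pi_B} B$, in which $X_B\in\X$ is a trivial path object and $\pi_B$ is a fibration, exhibits $U_B$ as the homotopy fibre, so $\Omega_{\M}(B)\cong Q\,\Omega_\X(B)=H_\X(B)$ naturally.

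Next I would match the triangle classes. A standard right triangle of $\Delta^\X(\C\cap\F)$ arises, by Proposition \ref{rstabilizing} and Lemma \ref{distriangle}, from a right $\C$-triangle $A\xrto{f} B\xrto{g} C$, i.e.\ a conflation with $C\in\C$ and $f$ an $\X$-monic, followed by the coreflection $R$. Between objects of $\C\cap\F$ such a conflation is a cofibration with cofibrant cokernel, hence a cofibre sequence of $\M$; under the identification $\Sigma_{\M}\cong G^\X$ of the previous paragraph, the induced right triangle $A\to B\to C\to\Sigma_{\M}(A)$ in $\Ho(\M)$ is precisely the image under $F$ of the given distinguished right triangle. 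Conversely, every cofibre sequence of $\Ho(\M)$ is isomorphic to one obtained from a conflation of cofibrant--fibrant objects, where the standard mapping-cylinder factorizations of (PRT1) let one arrange $f$ to be an $\X$-monic; thus $F$ carries $\Delta^\X(\C\cap\F)$ bijectively onto the cofibre sequences, and dually $\nabla_\X(\C\cap\F)$ onto the fibre sequences.

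The step I expect to be the main obstacle is the compatibility of the natural isomorphisms $\Sigma_{\M}\cong G^\X$ and $\Omega_{\M}\cong H_\X$ with the two loop--suspension adjunctions, equivalently the correct transport of the connecting morphisms and their signs. Here the pre-partial framework is favourable: because $\Sigma=\Omega=0$ on $\A$, the adjunction isomorphism $\psi$ of Definition \ref{defn:pptc}(a) is the trivial $(0,0)$-adjunction, so the $\psi$-twists occurring in Definition \ref{defn:pptc}(c) and in the construction of the bijection $\varphi_{A,B}$ of Lemma \ref{lem:adjointpair} vanish, and $\varphi_{A,B}$ reduces to the naive comparison between the defining conflation $A\to X^A\to U^A$ of $\Sigma^\X(A)$ and the defining conflation $U_B\to X_B\to B$ of $\Omega_\X(B)$. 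What remains is to check that this comparison is exactly Quillen's identification of the cofibre and fibre sequences through the unit and counit of $(\Sigma_{\M},\Omega_{\M})$; granting this, the units and counits of $(G^\X,H_\X)$ and of $(\Sigma_{\M},\Omega_{\M})$ correspond under $F$, every piece of the two pre-triangulated structures is matched, and Lemma \ref{lem:ptc} ensures that conditions (b) and (c) of Definition \ref{defn:ptc} are respected. Therefore $F$ is an equivalence of pre-triangulated categories.
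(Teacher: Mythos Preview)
Your overall strategy is sound and would lead to a correct proof, but the route differs from the paper's in one essential respect.

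The paper does not argue via Puppe sequences or uniqueness of adjoints. Instead it computes Quillen's suspension \emph{explicitly} on cofibrant objects: for $A\in\C$ the fold map $(1_A,1_A)$ factors as
\[
A\oplus A \xrightarrow{\left(\begin{smallmatrix} 1_A & 1_A \\ i^A & 0 \end{smallmatrix}\right)} A\oplus X^A \xrightarrow{(1_A,0)} A,
\]
which exhibits $A\oplus X^A$ as a concrete cylinder object. The cokernel of the cofibration is literally $U^A=\Sigma^\X(A)$, so $\Sigma^\M|_{\C}=\Sigma^\X$ as functors, not merely up to a natural isomorphism to be tracked. More importantly, with this cylinder the group coaction of $\Sigma^\M(A)$ on the cofibre of a cofibration $f$ is \emph{exactly} the morphism $\xi(f,g)$ of diagram~(\ref{xif}); hence Quillen's standard right triangle coincides on the nose with the one produced by the partial right triangulated structure, and the comparison of $\Delta^\X(\C\cap\F)$ with the cofibre sequences is immediate. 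The left side is dual.

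What this buys the paper is that the connecting morphisms and signs are matched by construction, so the issue you flag as ``the main obstacle'' (compatibility of the $(\Sigma,\Omega)$-adjunctions and the transport of units/counits) simply does not arise: once the two right triangulated structures and the two left triangulated structures agree under the embedding, the adjunction data are forced. Your approach trades this single explicit computation for a more conceptual identification $\Sigma_\M\cong G^\X$ via $X^A\simeq 0$, but then you must separately verify that the connecting maps and the adjunction isomorphism $\varphi_{A,B}$ of Lemma~\ref{lem:adjointpair} agree with Quillen's; that is doable, but it is extra work rather than less. If you want to follow the paper's line, replace your Puppe argument by the cylinder computation above and observe that the coaction map is $\xi(f,g)$.
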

\begin{proof} \  Denote by $\gamma_\M\colon \A\to \Ho(\M)$ the localization of $\A$ with respect to the weak equivalences. Note that for each object $A$ in $\A$, the special $\C$-precover $r_A\colon Q(A)\to A$ in the proof of Lemma \ref{lem:exacthomotopytriple} is the cofibrant replacement in the model category $\A$, and the special $\F$-preenvelope $j^A\colon A\to R(A)$ is the fibrant replacement. Therefore, by Quillen's homotopy category theorem \cite[Theorem I.1.1]{Quillen67}, $\Hom_{\Ho(\M)}(A,B)=\Hom_{(\C\cap \F)/\X}(R(Q(A)),R(Q(B)))$, and the homotopy category of cofibrant objects $\Ho(\C)$ and the homotopy category of fibrant objects $\Ho(\F)$ are equivalent to $\Ho(\M)$ as full subcategories. Moreover, any morphism from $A$ to $B$ in $\Ho(\M)$ is represented as $\gamma_\M(r_B)\circ \gamma_\M(j^{Q(B)})^{-1}\circ \ul{f} \circ \gamma_\M(j^Q(A))\circ \gamma_\M(r_A)^{-1}$ for some morphism $f\colon R(Q(A))\to R(Q(B))$ in $\C\cap \F$.

We recall the construction of the right triangulated structure of $\Ho(\M)$ in \cite[Chapter I, Sections 2-3]{Quillen67}. For every cofibrant object $A\in \C$, recall that we have fixed a right $\C$-triangle $A\xrto{i^A}X^A\xrto{p^A} U^A\to 0$, then $(1_A,1_A)$ can be factored as $ A\oplus A\xrto{\left(\begin{smallmatrix}
1_A & 1_A \\
i^A& 0
\end{smallmatrix}\right)} A\oplus X^A\xrto{(1_A,0)} A .$ Since $\left(\begin{smallmatrix}
1_A & 1_A \\
i^A& 0
\end{smallmatrix}\right)$ is a cofibration and $(1_A,0)$ is a trivial fibration, we know that $A\oplus X^A$ is a cylinder object of $A$. Moreover, by (\ref{kappaf}), for each morphism $f\colon A\to B$ in $\C$, we have the following commutative diagram
\[\xy\xymatrixcolsep{2pc}\xymatrix@C28pt@R26pt{ A\oplus A\ar[r]^-{\left(\begin{smallmatrix}
1_A&1_A\\
i^A&0
\end{smallmatrix}\right)}\ar[d]_{\left(\begin{smallmatrix}
f & 0\\
0 & f
\end{smallmatrix}\right)} & A\oplus X^A \ar[r]^-{(p^A,0)}\ar[d]^{\left(\begin{smallmatrix}
f & 0\\
0 & \sigma^f
\end{smallmatrix}\right)} & U^A \ar[d]^{\kappa^f}\\
 B\oplus B\ar[r]^-{\left(\begin{smallmatrix}
1_B&1_B\\
i^B&0
\end{smallmatrix}\right)}& B\oplus X^B\ar[r]^-{(p^B,0)}& B\oplus B
}\endxy\]
Thus, by Quillen's construction, the restriction of the suspension functor $\Sigma^\M|_\C$ on  $\Ho(\C)$ is defined by sending an object $A$ to $\Coker\left(\begin{smallmatrix}
1_{A} & 1_{A}\\
i^{A} & 0
\end{smallmatrix}\right)=\Coker i^{A}=U^{A}$ and a morphism $\gamma_\M(f)\colon A\to B$ to $\gamma(\kappa^{f})$. So $\Sigma^\M|_{\C}=\Sigma^\X$ on $\C$. Let $f\colon A\to B$ be a cofibration in $\C$, i.e. $f$ is an inflation in $\C$ with a cokernel $g\colon B\to C$ in $\C$. Then the standard right triangle in $\Ho(\M)$ is defined to be the sequence $A\xrto{\gamma_\M(f)}B\xrto{\gamma_{\M}(g)}C\xrto{\gamma_\M(\xi(f,g))}\Sigma^\M(A)$, where $\xi(f,g)$ is defined as in the diagram (\ref{xif}) since the group coaction of $\Sigma^\M(A)=U^A$ on $C$ is given by the morphism $\xi(f,g)$ in this case (see \cite[Section I.3]{Quillen67} for details).

Now assume that $A\xrto{\ul{f}} B\xrto{\ul{g}} R(C)\xrto{R(\ul{h})}G^\X(A)$ is a standard right triangle in the subfactor category $(\C\cap \F)/\X$, i.e. it is induced by the right triangle $A\xrto{\ul{f}}B\xrto{\ul{g}}C\xrto{\ul{h}}\Sigma^\X(A)$ in $\C/\X$. Without loss of generality, we may assume that $A\xrto{\ul{f}}B\xrto{\ul{g}}C\xrto{\ul{h}}\Sigma^\X(A)$ is induced by the diagram (\ref{xif}), thus $A\xrto{\ul{f}} B\xrto{\ul{g}} R(C)\xrto{R(\ul{h})}G^\X(A)$ is a distinguished right triangle in $\Ho(\M)$ since it is isomorphic to the standard right triangle $A\xrto{\gamma_\M(f)}B\xrto{\gamma_{\M}(g)}C\xrto{\gamma_\M(h)}\Sigma^\M(A)$ in $\Ho(\M)$. Therefore, the embedding $(\C\cap \F)/\X\hookrightarrow \Ho(\M)$ preserves the right triangulated structures (we remind the reader that for any object $A$ in $(\C\cap \F)/\X$, $G^\X(A)=R(\Sigma^\X(A))$ which is isomorphic to $\Sigma^\M(A)=\Sigma^\X(A)$ via $\gamma_\M(j^{\Sigma^\X(A)})$ in $\Ho(\M)$).

Dually, we can prove that the embedding $(\C\cap \F)/\X\hookrightarrow \Ho(\M)$ preserves the left triangulated structures.
 \end{proof}

\vskip10pt

\end{document}